\newcommand{\prs}{\langle\;,\;\rangle}
\newcommand{\too}{\longrightarrow}
\newcommand{\esp}{\quad\mbox{and}\quad}
\newcommand{\X}{{\cal X}}
\newcommand{\G}{{\mathfrak{g}}}
\newcommand{\h}{{\mathfrak{h}}}
\newcommand{\ric}{{\mathrm{ric}}}
\newcommand{\Ri}{{\mathrm{Ric}}}
\newcommand{\Ku}{{\mathrm{K}}}
\newcommand{\B}{{\cal B}}
\newcommand{\na}{\nabla}
\newcommand{\al}{\alpha}
\newcommand{\we}{\wedge}
\newcommand{\ga}{\gamma}
\newcommand{\e}{\epsilon}
\newcommand{\la}{\lambda}
\newcommand{\val}{\mapsto}
\newcommand{\vals}{\rightarrow}
\newcommand{\ssi}{\Longleftrightarrow}
\newcommand{\tr}{\mathfrak{trace}}
\font\bb=msbm10
\def\K{\hbox{\bb K}}
\def\B{\hbox{\bb B}}
\def\R{\hbox{\bb R}}
\def\N{\hbox{\bb N}}
\newtheorem{Def}{Definition}[section]
\newtheorem{theo}{Theorem}[section]
\newtheorem{pr}{Proposition}[section]
\newtheorem{Le}{Lemma}[section]
\newtheorem{co}{Corollary}[section]
\newtheorem{exem}{Example}
\newtheorem{rem}{Remark}
\begin{document}

\begin{frontmatter}

\title{ On the semi-symmetric Lorentzian spaces.}
 \author{Abderrazzak Benroummane}
 \address{Department of Computer Engineering and Mathematics\\National School of Applied Sciences of Berrechid \\Hassan 1st University, Morocco.\\e-mail: abderrazzak.benroummane@uhp.ac.ma
 }



\begin{abstract}
	We give some properties of semisymmetric pseudo-Riemannian manifolds. These are foliated manifolds and for the Lorentzian metric, Ricci operator has only real eigenvalues.
\end{abstract}

\begin{keyword} Lorentzean manifolds \sep  Semi-symmetric pseudo-Riemannian manifolds \sep  \sep

\end{keyword}

\end{frontmatter} 

\section{Introduction}

A pseudo-Riemannian manifold $ (M, g) $ is said  globally symmetric (respectively, locally symmetric ) if any point $ m \in M $ is a fixed point of a non-trivial involutive isometric $ \ga_m $ (respectively, any point $ m \in M $ admits a neighborhood $ N_m $ and the symmetric geodesic map $ s_m: N_m \vals N_m $ is an isometry, where $s_m$ is defined by $ x \val s_m (x) = \ga_x (-1) $ such that  $ \ga_x $ is the geodesic curve from $ m = \ga_x (0) $ to $ x = \ga_x (1) $ for all $ x \in N_m $)  [See \cite{besse}].

It is necessary that a globally symmetrical manifold is locally symmetric but is not sufficient unless the manifold is simply connected. So, that a manifold $ (M, g) $ is locally symmetric if and only if its Riemann curvature tensor $ R $ is parallel, that is to say that $ \na R = 0 $ where $\na$ is the Levi-Civita connection associeted to metric $g$. Indeed; the necessary condition is obvious from the fact that $ (ds_m) _m = -Id_ {T_mM} $ and for the sufficient condition it is a result of the following theorem:
\begin{theo}\cite{besse}
Let $ (M, g) $ be a pseudo-Riemannian manifold. Let $ \na $ and $ R $ be the Levi Civita connection and the Riemann curvature tensor associated to the metric $ g $ respectively.

Let $ (x, y) \in M^2 $ and $ \tau: T_xM \vals T_yM $ be an isomorphism. Suppose that $ \tau.R_x = R_y $ and $ \na R = 0 $. Then, there exists a neighborhood $ V_x $ of $ x $ and a diffeomorphism $ \varphi: V_x \vals V_y $ with values in a neighborhood $ V_y $ of $ y $ such that $ (d \varphi) _x = \tau $ .
\end{theo}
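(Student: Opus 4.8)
The plan is to write down $\varphi$ explicitly as a composition of exponential maps and then, using the two hypotheses, to upgrade it from a mere diffeomorphism to a local isometry (the property that actually makes the statement useful for deducing local symmetry). First I would pick a star-shaped neighbourhood $U$ of $0\in T_xM$ on which $\exp_x$ is a diffeomorphism onto its image $V_x$ and such that $\exp_y$ is a diffeomorphism on $\tau(U)$, and set
\[
\varphi=\exp_y\circ\tau\circ\exp_x^{-1}:V_x\longrightarrow V_y .
\]
As a composition of diffeomorphisms $\varphi$ is a diffeomorphism, and since $(d\exp_x)_0$ and $(d\exp_y)_0$ are the canonical identifications, the chain rule gives $(d\varphi)_x=\tau$ immediately. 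The curvature hypotheses are not needed for this last conclusion; their role is to control $(d\varphi)$ away from $x$.

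For $v\in U$ let $\ga(t)=\exp_x(tv)$ be the radial geodesic and $\wi\ga(t)=\varphi(\ga(t))=\exp_y(t\tau v)$ its image, which is again a geodesic. I would transport $\tau$ along these geodesics by parallel translation, defining for each $t$ the linear isomorphism
\[
\tau_t=P^{\wi\ga}_{0,t}\circ\tau\circ P^{\ga}_{t,0}:T_{\ga(t)}M\longrightarrow T_{\wi\ga(t)}M ,
\]
where $P$ denotes parallel transport and $\tau_0=\tau$. By construction $\tau_t$ commutes with covariant differentiation along the curves, i.e. $\frac{D}{dt}(\tau_tW)=\tau_t\frac{D}{dt}W$, and $\tau_t\dot\ga(t)=\dot{\wi\ga}(t)$, both sides being the parallel field with value $\tau v$ at $t=0$; moreover $\tau_t$ is a linear isometry whenever $\tau$ is, since parallel transport preserves $g$.

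The key step is to propagate the curvature identity. Because $\na R=0$, parallel transport preserves $R$, so $P^{\ga}_{t,0}\cdot R_{\ga(t)}=R_x$ and $R_{\wi\ga(t)}=P^{\wi\ga}_{0,t}\cdot R_y$; combined with $\tau\cdot R_x=R_y$ this gives
\[
\tau_t\cdot R_{\ga(t)}=R_{\wi\ga(t)}\qquad\text{for all }t .
\]
I would then check that $\tau_t$ sends Jacobi fields to Jacobi fields: if $J$ solves $\frac{D^2J}{dt^2}+R(J,\dot\ga)\dot\ga=0$ along $\ga$, then $\wi J:=\tau_tJ$ solves $\frac{D^2\wi J}{dt^2}+R(\wi J,\dot{\wi\ga})\dot{\wi\ga}=0$ along $\wi\ga$, using that $\tau_t$ commutes with $\frac{D}{dt}$, maps $\dot\ga$ to $\dot{\wi\ga}$, and intertwines the curvature operators. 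Since $(d\exp_x)_{tv}$ and $(d\exp_y)_{t\tau v}$ are expressed through Jacobi fields vanishing at the origin, this identifies $(d\varphi)_{\ga(t)}$ with $\tau_t$ at every point of $V_x$; as each $\tau_t$ is a linear isometry, $\varphi$ is a local isometry with $(d\varphi)_x=\tau$.

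I expect the main obstacle to be precisely the Jacobi-field verification, namely the identity $\tau_t\big(R(J,\dot\ga)\dot\ga\big)=R(\wi J,\dot{\wi\ga})\dot{\wi\ga}$; this is the single place where both hypotheses are consumed, and dropping either $\na R=0$ or $\tau\cdot R_x=R_y$ would break it. A secondary technical point is to make the identification of $(d\varphi)$ with $\tau_t$ rigorous through the Jacobi-field description of $d\exp$, which requires care in matching the initial conditions at the origin of each radial geodesic.
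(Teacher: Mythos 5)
The paper does not actually prove this theorem: it is quoted with a citation to Besse, so there is no internal proof to compare against. Your argument is the standard Cartan local rigidity proof (essentially Theorem 1.12 of the cited reference): set $\varphi=\exp_y\circ\tau\circ\exp_x^{-1}$, propagate $\tau$ by parallel transport along corresponding radial geodesics to get $\tau_t$, use $\nabla R=0$ together with $\tau\cdot R_x=R_y$ to obtain $\tau_t\cdot R_{\gamma(t)}=R_{\tilde\gamma(t)}$, and identify $(d\varphi)_{\gamma(t)}$ with $\tau_t$ through the Jacobi-field description of $d\exp$; all the individual steps (commutation of $\tau_t$ with $D/dt$, matching of initial conditions $\tilde J(0)=0$, $\tilde J'(0)=\tau w$, uniqueness of Jacobi fields) are sound, so the proof is correct and coincides with the one in the source. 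One point worth flagging: as literally stated, the theorem only asserts the existence of a diffeomorphism with $(d\varphi)_x=\tau$, which, as you yourself observe, holds for any linear isomorphism $\tau$ with no curvature hypotheses whatsoever; the statement that actually carries content (and that the paper needs to deduce local symmetry from $\nabla R=0$ by taking $\tau=-\mathrm{Id}_{T_xM}$) is that $\varphi$ is a local \emph{isometry} when $\tau$ is a linear isometry, and that stronger conclusion is exactly what your Jacobi-field step delivers.
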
    

E. Cartan in (\cite{cartan, Cartan1}) showed that a locally symmetric Riemannian manifold $ (M, g) $ is locally homogeneous, more precisely, $ M = G / H_0 $ where $ G $ is the connected component of $ id_M $ in $ Iso(M) $ the Lie group of isometries of $ M $ such that  $ G$ acts transitively on $ M $. $ H_0 $ is the isotropic subgroup at the point $ m \in M $. This structure is independent of the chosen point $ m $.
  
  On the other hand, $ \G: = {\h}_0 + T_mM $ is the Lie algebra of $ G $,  where $\h_0 $ is the Lie algebra of $ H_0 $, $ T_mM $ is the tangent space of $ M $ at the point $ m $. The Lie bracket on $ \G $ is given by:
\begin{equation}\label{equa1}
\begin{array}{cccc}	\left[ X,Y \right]  &:=&R_m(X,Y),&\;\; \forall X,\; Y\in T_mM,\\
		\left[ A,X\right] &:=&A(X),&\;\; \forall X\in T_mM,\; \forall A\in\h_0,\\
			 \left[ A,B\right] &:=&AB-BA,&\;\; \forall A,\; B\in \h_0.\end{array}\end{equation}
			 
			 Moreover, $R$  checked \begin{equation}\label{1equa2}
			 \left[ R_m(x,y),R_m(z,t)\right] =R_m(R_m(x,y)z,t)+R_m(z,R_m(x,y)t),\; \forall (x,y,z,t)\in (T_mM)^4,
			 \end{equation}
\begin{Def}\begin{enumerate}
\item[a)]  Let $(V,g)$ be a pseudo-Riemannian vector space, 
	 $ \h $ be a Lie subalgebra of $so(V)$ and $ R $ be a curvature tensor on $ V $. If $ R (x, y) \in \h $ for all $ (x, y) \in V^2 $, the triple $ \left [V, \h, R \right] $ is said  holonomy sysmtem. If moreover $R$ satisfies the relationship
	 \begin{equation}\label{eqsim12}
	 (A.R)(x,y) =[A,R(x,y)]-R(A(x),y)-R(x,A(y)=0,\; \forall (x,y)\in V^2,
	 \end{equation}
	 the holonomy sysmtem $ \left [V, \h, R \right] $ is said to be semmetric  (see\cite{simon}). 
\item[b)]
	 A pseudo-Riemannian manifold $ (M, g) $ is said to be semi-symmetric if, its Riemann curvature tensor satisfies the equation (\ref {1equa2}). 
\end{enumerate}

\end{Def}
 
Note that if, $ (M, g) $ is a semi-symmetric pseudo-Riemannian manifold, then, at any point $m\in M$, the triple $ \left [T_mM, \h(R_m), R_m \right] $ is symmetric, where $ \h(R_m)$ is the vector space spanned by all $R(x,y)$ for all  $x$ and $y$ in $T_mM$, moreover $ \h(R_m)$ is  a Lie algebra.  This result allows K. Nomizu in 1968 (\cite{nomizu}) to conjecture that any semi-symmetric Riemannian manifold $ (M, g) $  of dimension greater than or equal to $ 3 $ is locally symmetric (ie, $ \na R = 0 $). But, in 1972, H. Takagi (\cite{takagi}) gave a counter example of a Riemannian manifold of dimension $ 3 $ satisfying $ R(X, Y).R = 0 $ and not $ \na R = 0 $.

On the other hand, semi-symmetry is a generalization of two-symmetry ($ \na^2R = 0 $) and of local symmetry. Several studies are done on semi-symmetry. In the Riemannian case, Z. I Szabo gave the complete classification of semi-symmetric Riemannian manifolds (\cite{zabo, zabo1, zabo2}). For the other strictly pseudo-Riemannian metrics, there is no general study, the few cases studied are with additional conditions.   We can mention   the complete classification of four dimensional semisymmetric homogeneous Lorentzian manifolds given by the auther , M. boucetta and A. Ikemakhen (See \cite{benromane}). In    \cite{benromane2}, the auther gave the complete classification of four dimensional semisymmetric homogeneous neutral manifolds. In \cite{calvaruso1}, G. Calvaruso   has studied  the three-dimensional semi-symmetric homogeneous Lorentzian manolds. G. Calvaruso and  B. De Leo   have studied the  semi-symmetric Lorentzian three-manifolds admitting a parallel degenerate line field (See \cite{calvaruso2}). In \cite{ali}, A. Haji-Badali and A. Zaeim gave a complete classification of Semi-symmetric four dimensional neutral Lie groups.

The purpose of this work is to give a few characterizations of semi-symmetric Lorentzian manifolds where we give the following results:
\begin{enumerate}
	\item Any semi-symmetric Lorentzian manifold, its Ricci operator has only real eigenvalues.
	\item A semi-symmetric pseudo-Riemannian manifold $ (M, g) $ is   foliated manifold. In the case where the metric is Lorentzian, the restriction of the Ricci operator on each leaf has at most one non-zero real  eigenvalue.
	\item If $ (M, g) $ is a simple leaf semi-symmetric Lorentzian manifold, then the Ricci opertor is diagonalizable or isotropic. Moreover, if  $ \la $ is a non-zero  eigenvalue of Ricci and $ V_1: = \ker (\Ri- \la id_{TM}) $ is Riemannian subspace of dimonsion $\geq 3$, then,   $ 1 \leq \dim (\ker (\Ri)) \leq 2 $.
\end{enumerate} 
  This paper is organized in the following way.
  In the second paragraph, we give generalities on curvature tensors on a pseudo-Riemannian vector space $ V $. Thus, characterizations of a semi-symmetric curvature tensor by decomposing the space $ V $ as a direct sum of the characteristic subspaces of the Ricci operator. We also give another so-called primitive decomposition under the action of primitive holonomy algebra. In third paragraph, we give some proprties of \emph{Ricci decomposition} and  the \emph{primitive decomposition} of  tangent bundle of the  semi-symmetric pseudo-Riemannian manifold.  The last paragph   is devoted to studying  semi-symmetric Lorentzian manifolds  which  is a simple leaf  where Ricci is diagonalizabe or  isotropic  and if Ricci admits a non-zero eigenvalue then it is diagonalizable. Moreover,  if this eigenvalue has a multiplicity hose greater than or equal to $ 3 $ with Riemannian eigenspace, we will have $ 1 \leq \dim (\ker (\Ri)) \leq 2 $.

 At first, we  give some definitions that we will use later
\begin{Def}
	Let $ (M, g) $ be a locally connected pseudo-Riemannian manifold.\begin{enumerate}\item $ (M, g) $ is said to be
		irreducible if, at each point $m\in M$, the only subspace of $ T_mM $ which are invariant under  the action of the holonomy group $ H_m (M) $ are $ \{0 \} $ and $ T_mM $, and reducible otherwise.
		\item $ (M, g) $ is said to be indecomposable if, at any point $ m \in M $, the only nondegenerate subspaces of $ T_mM $ which are invariant under  the action of the holonomy Lie group $ H_m (M) $  are $ \{0 \} $ and $ T_mM $.
		\item $ (M, g) $ is said to be weakly irreducible if it is reducible and indecomposable.
	\end{enumerate}
\end{Def}
\section{Semisymmetrical curvature tensor on a pseudo-Riemannian vector space }
\subsection{Curvature tensor}
Let $(V,\prs)$ be a $n$-dimensional pseudo-Riemannian vector space. We identify $V$ and  its dual $V^*$ by the means of $\prs$. This implies that the Lie algebra $V\otimes V^*$ of endomorphisms of $V$ is identified with $V\otimes V$, the Lie algebra $\mathrm{so}(V,\prs)$ of skew-symmetric endomorphisms is identified with $V\wedge V$ and the space of symmetric endomorphisms is identified with $V\vee V$ (the symbol $\wedge$ is the outer product and $\vee$ is the symmetric product). For any $u,v\in V$,
\[ (u\wedge v)w=\langle v,w\rangle u-\langle u,w\rangle v\esp (u\vee v)w=\frac12\left(\langle v,w\rangle u+\langle u,w\rangle v\right). \]

On the other hand, $V\wedge V$ carries  also a nondegenerate symmetric product also denoted by $\prs$ and given by
\[ \langle u\wedge v,w\wedge t\rangle:=\langle u\wedge v(w), t\rangle=
\langle v,w\rangle\langle u,t\rangle-\langle u,w\rangle\langle v,t\rangle. \]
We identify $V\wedge V$ with its dual by means of this metric.


We consider  Bianchi's linear mapping of   the space $ P =\vee^2 (\we^ 2V)  $ given by:
   \begin{equation}\label{eq1-4}B\bigg( (a\we b)\vee (c\we d)\bigg) =(a\we b)\vee (c\we d)+(b\we c)\vee (a\we d)+(c\we a)\vee (b\we d).\end{equation}
  Let     $ \G $ be a subalgebra of $so(V)$, we set:
  \[ R(\G):=\ker(B)=\{T \in \G\vee \G /~~ B(T)=0\} \]et
    \[\G_{sym}=\{T\in R(\G) / \G.T=0\}.\]
    
  The set  $R(\G)$  is called the  space of all {\it curvature tensors} of type  $\G$ and any element of $ R (so (V)) $ is called {\it curvature tensor} on $ V $. 
The set $ \G_ {sym} $ is called the space of symmetric curvature tensors of type $ \G $.

    According identifications cited above, we obtained;

       \begin{Le}\label{le1-1}  Let $ (V, \prs) $ be a pseudo-Riemannian vector space. Any curvature tensor $ \Ku \in R (so (V)) $ can be identified with an element of $ \otimes^4V^* $ (i.e., a covariant 4-tensor on V) satisfying:
\begin{enumerate}\item[i)] $\Ku(a,b,u,v)=-\Ku(b,a,u,v)$,
\item[ii)] $\Ku(a,b,u,v)=-\Ku(a,b,v,u)$,
\item[iii)] $\Ku(a,b,u,v)+\Ku(b,u,a,v)+ \Ku(u,a,b,v)=0,$
\item[iv)] $\Ku(a,b,u,v)=\Ku(u,v,a,b)$,
\end{enumerate}where  $a,b,u,v \in V$. Note that $ iv) $ is a result of $i)$, $ii)$ et $iii)$.

\end{Le}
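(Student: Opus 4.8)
The plan is to make the identification $R(so(V))\hookrightarrow\otimes^4V^*$ explicit and then read off each of the four symmetries from the algebraic position a curvature tensor occupies inside $\vee^2(\we^2V)$. First I would use the metric on $\we^2V$ introduced above to regard a tensor $\Ku\in so(V)\vee so(V)=\vee^2(\we^2V)$ as a symmetric endomorphism of $\we^2V$: on a generator $\om\vee\eta$ the symmetric product acts by $\xi\mapsto\frac12(\langle\eta,\xi\rangle\om+\langle\om,\xi\rangle\eta)$, exactly as the displayed formula for $\vee$ prescribes. I then define the covariant $4$-tensor by $\Ku(a,b,u,v):=\langle \Ku(a\we b),u\we v\rangle$. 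Since $a\we b=-b\we a$ and $u\we v=-v\we u$, properties $i)$ and $ii)$ are immediate; and since the endomorphism is self-adjoint for $\langle\;,\;\rangle$ — which is precisely the content of $\Ku$ lying in the symmetric product $\vee^2$ rather than the full tensor product — the form $(\om,\eta)\mapsto\langle\Ku(\om),\eta\rangle$ is symmetric, giving $iv)$.

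It then remains to match the defining condition $B(\Ku)=0$ with the first Bianchi identity $iii)$. Here I would run the same dictionary on the element $B(\Ku)\in P$. Applying the definition (\ref{eq1-4}) of $B$ on generators and pairing the resulting symmetric endomorphism against $a\we b$ and $u\we v$, a direct (if slightly tedious) expansion shows that the $4$-tensor attached to $B(\Ku)$, evaluated at $(a,b,u,v)$, equals a fixed nonzero multiple of the cyclic sum $\Ku(a,b,u,v)+\Ku(b,u,a,v)+\Ku(u,a,b,v)$. Hence $\Ku\in\ker(B)$ if and only if this cyclic sum vanishes for all arguments, that is, $iii)$ holds. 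This bookkeeping, reconciling the definition of $B$ with the metric pairing on $\we^2V$, is the step requiring the most care, though it is entirely computational and introduces no new idea.

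Finally I would establish the asserted remark that $iv)$ is already a consequence of $i)$, $ii)$, $iii)$, independently of the symmetric-product structure, since this is the classical algebraic fact. The argument is to write $iii)$ for the four cyclic positions of the arguments,
\begin{align*}
&\Ku(a,b,u,v)+\Ku(b,u,a,v)+\Ku(u,a,b,v)=0,\\
&\Ku(b,u,v,a)+\Ku(u,v,b,a)+\Ku(v,b,u,a)=0,\\
&\Ku(u,v,a,b)+\Ku(v,a,u,b)+\Ku(a,u,v,b)=0,\\
&\Ku(v,a,b,u)+\Ku(a,b,v,u)+\Ku(b,v,a,u)=0,
\end{align*}
and to add them. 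Using the skew-symmetries $i)$ and $ii)$ the twelve terms collapse in pairs, leaving $2\Ku(u,a,b,v)-2\Ku(b,v,u,a)=0$, which after one further application of $i)$ and $ii)$ and a relabelling of the arguments is precisely $iv)$. I expect the only delicate point in this last paragraph to be tracking the signs correctly while collapsing the sum; everything else is forced.
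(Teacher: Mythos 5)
Your proposal is correct and is exactly the standard argument the paper has in mind: the paper states this lemma without any proof, presenting it as an immediate consequence of the identifications $\vee^2(\we^2V)\cong$ symmetric endomorphisms of $\we^2V$ via $\Ku(a,b,u,v)=\langle \Ku(a\we b),u\we v\rangle$, and your write-up simply makes that verification explicit, including the equivalence of $B(\Ku)=0$ with the first Bianchi identity. Your derivation of $iv)$ from $i)$--$iii)$ by summing the four cyclic instances of the Bianchi identity is also carried out correctly (the surviving terms do collapse to $2\Ku(u,a,b,v)-2\Ku(b,v,u,a)=0$, which is $iv)$ after relabelling).
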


\begin{Le}\label{1-2} Let $ (V, \prs) $ be a pseudo-Riemannian vector space. Any curvature tensor $ \Ku \in R (so (V)) $ can be identified with a symmetric bilinear map also denoted by $ \Ku: V \we V \too V \we V $ satisfying:
\begin{enumerate}\item for all  $u$, $v\in V$, $\Ku(u\we v)=-\Ku(v\we u)$,
\item for all  $u,v,w\in V$, $\Ku(u\we v)w+\Ku(v\we w)u+\Ku(w\we u)v=0$.
\end{enumerate} These relationships lead to:
\begin{equation}\label{eq1-5}
\langle \Ku(a\we b)u,v\rangle=\langle \Ku(u\we v)a,b\rangle,\quad a,b,u,v\in V.
\end{equation}We often set: $$\Ku(u,v):=\Ku(u\we v)$$
\end{Le}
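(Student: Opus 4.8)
The plan is to obtain the two displayed relations and the symmetry (\ref{eq1-5}) as direct translations of the four identities of Lemma \ref{le1-1}, once a careful dictionary between the covariant $4$-tensor picture and the endomorphism (bilinear-map) picture is fixed; there is no genuine analytic difficulty here, only bookkeeping of conventions.

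First I would fix the identification. Starting from the covariant $4$-tensor also denoted $\Ku$ provided by Lemma \ref{le1-1}, I define for $u,v\in V$ the element $\Ku(u\we v)$ by requiring
\begin{equation}\label{plan-bridge}
\langle \Ku(u\we v)w,z\rangle=\Ku(u,v,w,z),\qquad w,z\in V,
\end{equation}
which is meaningful because $\prs$ is nondegenerate. Antisymmetry $ii)$ of Lemma \ref{le1-1} says that $w\mapsto \Ku(u\we v)w$ is skew-symmetric, so $\Ku(u\we v)\in so(V)=V\we V$ and the output of $\Ku$ indeed lies in $V\we V$; antisymmetry $i)$ says the left-hand side is alternating in $(u,v)$, so the assignment descends to a well-defined linear map $\Ku\colon V\we V\too V\we V$. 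In terms of the induced metric on $V\we V$, formula (\ref{plan-bridge}) reads $\langle \Ku(u\we v),w\we z\rangle=\Ku(u,v,w,z)$, so that $\Ku$ is the endomorphism of $V\we V$ represented by the element $\Ku\in\G\vee\G$ we started with.

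Next I would read off the three assertions. Relation $1$ is immediate from linearity together with $u\we v=-v\we u$ (equivalently from $i)$). For relation $2$, I fix $u,v,w$ and pair the left-hand side with an arbitrary $z\in V$; by (\ref{plan-bridge}) this gives
\[
\langle \Ku(u\we v)w+\Ku(v\we w)u+\Ku(w\we u)v,\,z\rangle=\Ku(u,v,w,z)+\Ku(v,w,u,z)+\Ku(w,u,v,z),
\]
which vanishes by the first Bianchi identity $iii)$; nondegeneracy of $\prs$ then yields relation $2$. Finally, for (\ref{eq1-5}) I compute, using (\ref{plan-bridge}) and then the pair symmetry $iv)$,
\[
\langle \Ku(a\we b)u,v\rangle=\Ku(a,b,u,v)=\Ku(u,v,a,b)=\langle \Ku(u\we v)a,b\rangle .
\]
The same identity $iv)$ expresses that $\Ku$ is symmetric for the induced metric on $V\we V$, in agreement with $\Ku$ being an element of $\G\vee\G$ by construction. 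The only point requiring attention — and hence the closest thing to an obstacle — is to keep the two readings of $\Ku(u\we v)$ (a bivector in $V\we V$ and a skew endomorphism of $V$) consistent through the metric of the previous subsection, so that the signs in (\ref{plan-bridge}) match the conventions $(u\we v)w=\langle v,w\rangle u-\langle u,w\rangle v$ and $\langle u\we v,w\we t\rangle=\langle v,w\rangle\langle u,t\rangle-\langle u,w\rangle\langle v,t\rangle$ already fixed.
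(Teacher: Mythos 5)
Your proposal is correct and is essentially the argument the paper intends: the lemma is presented as an immediate consequence of the identifications of Section 2.1, and your dictionary $\langle \Ku(u\we v)w,z\rangle=\Ku(u,v,w,z)$ together with properties $i)$--$iv)$ of Lemma \ref{le1-1} is exactly the routine bookkeeping being left implicit. Your care with the sign conventions for $(u\we v)w$ and the induced metric on $V\we V$ matches the paper's conventions, so nothing further is needed.
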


 Let $ (V, \prs) $ be a pseudo-Riemannian vector space and let $\Ku\in R(so(V))$ be a curvature tensor  $V$.  We  set:  \[\h(\Ku):=\Ku(\we^2V)=\{\Ku(u,v)/\;u,\;v\in V\},\]
 $\h(\Ku)$ is a vector subspace of $so(V)$. We set $\bar{\h}(\Ku)$ the smallest Lie subalgebra of $so(V)$ containing $\h(\Ku)$  called {\it primitive holonomy algebra of $\Ku$}, its  Lie groupe is noted     $\bar{\cal H}(\Ku)$  and  called  {\it  primitive holonomy group}.  The action of $\bar{\h}(\Ku)$  and  $\bar{H}(\Ku)$) on the curvature tensor $\Ku$ are given respectively by:
   \begin{equation}\label{eq1-6}
    (A.\Ku)( a,b):= [A,\Ku(a,b)]- \Ku(A(a),b)-\Ku(a,A(b)),
  \end{equation} and
   \begin{equation}\label{eq1-6"}
    (\sigma.\Ku)( a,b):= \sigma\circ\Ku(\sigma^{-1}( a), \sigma^{-1}(b))\circ \sigma^{-1} ,
  \end{equation}

    where $A\in \bar{\h}(\Ku)$, $\sigma \in \bar{H}(\Ku)$ and  $a$, $b\in V$.

The Ricci curvature associated to $\mathrm{K}$ is the symmetric bilinear form on $V$ given by $\mathfrak{ric}_{\Ku}(u,v)= trace(\tau(u,v))$, where $\tau(u,v):V\too V$ is given by $\tau(u,v)(a)=\mathrm{K}(u, a)v$. The  Ricci operator is the symmetric endomorphism $\mathrm{Ric}_K:V\too V$ given by $\langle
\mathrm{Ric}_{\Ku}(u),v\rangle =\mathfrak{ric}_{\Ku}(u,v), \,\,\text{ for all } u,v\in V$.   We call $\Ku$ Einstein (resp. Ricci isotropic) if $\Ri_{\Ku}=\la\mathrm{Id}_V$ (resp. $\Ri_{\Ku}\not=0$ and $\Ri_{\Ku}^2=0$).

\begin{exem}  if ${\Ku}=(u\wedge v)\vee(w\wedge t)$ then,
\[ \ric_K=\langle u,w\rangle t\vee v+\langle v,t\rangle u\vee w-\langle v,w\rangle t\vee u-
\langle u,t\rangle v\vee w.\]
\end{exem}
   Note that, if there is no ambiguity, we put $\Ri=\Ri_{\Ku}$ and $\mathfrak{ric}=\mathfrak{ric}_{\Ku}$.


 \subsection{Semi-symmetrical curvature tensors}
 \subsubsection{Primitive decompotion}
 
 \begin{Def}Let $(V,\prs)$ be a $n$-dimensional pseudo-Riemannian vector space and let $\Ku$ a curvature tensor on $ V $.
 	The curvature tensor $ \Ku $ is called {\it semi-symmetric} if $ \h(
 	\Ku) $ is a Lie algebra and $ \Ku $ is a symmetric curvature tensor of type $\h (\Ku)$, i.e. $ \Ku \in \h(\Ku)_{sym}. $
 \end{Def}
 
  \begin{rem}
 	
 	In a pseudo-Riemannian space $ (V,\prs) $ of small dimension, to give the semi-symmetric curvature tensors on $ V $, it suffices to give the Lie subalgebras $ \G $ of $ so (V) $ satisfying, $ \G = \G_ {sym} $ (For example, see \cite{benromane, benromane2}).
 \end{rem}
 
 It is obvious that if the curvature $\Ku$ is  constant,  i.e.   $\Ku=\la Id_{V\wedge V}$, then it is   semi-symmetric  and if $\la\neq 0$, we get   $\mathfrak{h} (\Ku)=\mathrm{so}(V)$. In particular, any   curvature tensor on the space pseudo-riemannian of dimension $2$ is semi-symmetric.
 
  \begin{pr} Let $(V,\prs)$ be a  pseudo-Riemannian vector space of dimension $n$ and let $\Ku$ be a curvature tensor on $ V $ treated as an
  	symmetrical endomorphism,   
     $\Ku\;:\:a\we b \in \we^2 V\val \Ku(a\we b)\in \we^2V$. Then,
     
    $\Ku$ is   semi-symmetric iff  $\Ku.\Ku=0$, i.e.  $\Ku$ checks; \begin{equation}\label{eq1-9}
 [\Ku(u,v),\Ku(a,b)]=\Ku(\Ku(u,v)a,b)+\Ku(a,\Ku(u,v)b),\quad u,v,a,b\in V.\end{equation}
  \end{pr}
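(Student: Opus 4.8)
The plan is to unravel the definition of semi-symmetry and reduce the equivalence to three verifications: that $\h(\Ku)$ is closed under the Lie bracket, that $\Ku$ belongs to $R(\h(\Ku))$, and that $\h(\Ku).\Ku=0$. The forward implication is then immediate: if $\Ku$ is semi-symmetric then $\Ku\in\h(\Ku)_{sym}$, so $A.\Ku=0$ for every $A\in\h(\Ku)$; applying this to $A=\Ku(u,v)\in\h(\Ku)$ and reading off the defining formula (\ref{eq1-6}) for the action yields exactly (\ref{eq1-9}). Hence the substance lies in the converse, where one assumes (\ref{eq1-9}).

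For the converse, the key observation is that the right-hand side of (\ref{eq1-9}) is nothing but $\Ku$ evaluated on the natural derivation action of $A=\Ku(u,v)\in so(V)$ on $\we^2V$, namely $A\cdot(a\we b):=A(a)\we b+a\we A(b)$. Indeed $\Ku(\Ku(u,v)a,b)+\Ku(a,\Ku(u,v)b)=\Ku\big(A\cdot(a\we b)\big)$, which lies in $\Ku(\we^2V)=\h(\Ku)$. Thus (\ref{eq1-9}) reads $[\Ku(u,v),\Ku(a,b)]\in\h(\Ku)$ for all $u,v,a,b\in V$. Since the elements $\Ku(a,b)$ span $\h(\Ku)$ and the bracket is bilinear, this gives $[\h(\Ku),\h(\Ku)]\subseteq\h(\Ku)$, so that $\h(\Ku)$ is a Lie subalgebra of $so(V)$. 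I expect this reinterpretation of the right-hand side to be the main step: it simultaneously supplies the Lie-algebra closure.

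It then remains to check $\Ku\in\h(\Ku)_{sym}$. First, $\Ku\in R(\h(\Ku))$: the Bianchi condition $B(\Ku)=0$ holds for any curvature tensor by Lemma \ref{1-2}(2), and $\Ku\in\h(\Ku)\vee\h(\Ku)$ follows from a short linear-algebra argument — $\Ku$ is a symmetric endomorphism of $\we^2V$ with image $\h(\Ku)$, so, viewing $\Ku\in\we^2V\otimes\we^2V$, both partial images equal $\h(\Ku)$, forcing $\Ku\in\h(\Ku)\otimes\h(\Ku)$ and hence, by symmetry, $\Ku\in\h(\Ku)\vee\h(\Ku)$. Finally, $\h(\Ku).\Ku=0$: the map $A\val A.\Ku$ is linear in $A$ by (\ref{eq1-6}), and (\ref{eq1-9}) says precisely that it vanishes on every generator $\Ku(u,v)$ of $\h(\Ku)$, hence on all of $\h(\Ku)$. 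Combining these, $\Ku\in\h(\Ku)_{sym}$, so $\Ku$ is semi-symmetric. The only place where genuine care is needed is the membership $\Ku\in\h(\Ku)\vee\h(\Ku)$, since the metric induced on $\h(\Ku)$ may be degenerate and one must argue through partial images rather than by diagonalizing $\Ku$.
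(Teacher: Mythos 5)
Your proof is correct. The paper states this proposition without any proof, so there is nothing to compare against; your argument supplies the standard verification and, as far as I can see, a complete one. All three ingredients of the definition are accounted for: closure of $\h(\Ku)$ under the bracket, obtained by reading the right-hand side of (\ref{eq1-9}) as an element of $\h(\Ku)=\Ku(\we^2V)$ and using that the $\Ku(u,v)$ span $\h(\Ku)$ together with bilinearity of the bracket; the membership $\Ku\in\h(\Ku)\vee\h(\Ku)$, where your partial-image argument is the right one and your caution is well placed (the metric on $\we^2V$ is nondegenerate, but its restriction to $\h(\Ku)$ need not be, so diagonalization is unavailable); and $\h(\Ku).\Ku=0$ by linearity of $A\mapsto A.\Ku$ on the spanning set. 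The Bianchi condition $B(\Ku)=0$ is inherited from the hypothesis $\Ku\in R(so(V))$, as you note, and the forward implication is indeed immediate from specializing $A=\Ku(u,v)$ in (\ref{eq1-6}).
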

 Let $\Ku$ be a semisymmetric curvature tensor on $n-$dimensional pseudo-Riemannian vector space  $(V,\prs)$. Then     ${\mathfrak{h} }(\Ku)=\bar{\h}(\Ku)$  is the primitive holonomy Lie algebra of $\Ku$ and  we set  ${\cal H}(\Ku):=\bar{\cal H}(\Ku)$  its the  primitive holonomy Lie group. Moreover, $\Ku$ is invariant by ${\cal H}(\Ku)$ and  it is parallel under the action of ${\mathfrak{h} }(\Ku)$, i.e;
 \[\sigma^*\Ku=\Ku \ssi (\sigma^*\Ku )(a,b)u=\sigma\big(\Ku(\sigma^{-1}(a),\sigma^{-1}(b))(\sigma^{-1}(u))\big)=\Ku(a,b)(u)\] and
  \[\Ku(a,b).\Ku=0 \ssi [\Ku(a,b),\Ku(u,v)]= \Ku(\Ku(a,b)(u),v)+\Ku(u,\Ku(a,b)(v)), \]  for all  $\sigma\in {\cal H}(\Ku)$ and for all $(a, b,u, v)  \in V^4$.

 So the action of the primitive holonomy Lie algebra on the space $V$ introduces a weakly  decomposition of $V$;
 \begin{equation}\label{eqv9}
 V=V_0+ V_1+\ldots+ V_s+ V_0'
 \end{equation}where $V_0=\Big\{ x\in V\;/\; \K(u,v)x=0, \forall (u,v)\in V^2\Big\} $,  $V_0'$ is the dual of the subspace $V_0\cap  (V_1+\ldots+ V_s)$  and each subspace  $V_i+(V_i\cap  V_i^\bot)'$ is indecomposable subspace under the action of ${\mathfrak{h} }(\Ku)$ for all $i\geq1$. 
  \begin{Def} The decomposition (\ref{eqv9}) is called  {\it primitive decompostion} of $V$.

 \end{Def}
Moreover, the   primitive decomposition satisfies the following properties;  
 \begin{enumerate}
	\item for any  $i=0,\ldots,s$, $V_i$ is ${\mathfrak{h} }(\Ku)$-invariant,
	\item for any $i,j=0,\ldots,s$ with $i\not=j$, $\Ku(V_i, V_j)=0$,

\end{enumerate}
\subsubsection{Ricci decomposition}

 \begin{Le}\label{Le1-32} Let $ \Ku $ be a semi-symmetrical curvature tensor on the pseudo-Riemannian space $ (V, \prs) $. Then, its Ricci operator commutes with all the endomorphisms $ \Ku (u, v) $, that is:
\begin{equation}\label{eq1-10} \Ku(u,v)\circ\mathrm{Ric}=\mathrm{Ric}\circ \Ku(u,v),~~~\forall ~u,v\in V.
\end{equation}\end{Le}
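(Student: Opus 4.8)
The plan is to exploit the fact that, by semi-symmetry, each endomorphism $A:=\Ku(u,v)$ is a skew-symmetric derivation of $\Ku$, and then to show that such a derivation automatically annihilates the Ricci form; the commutation relation \eqref{eq1-10} will follow by transposing with the skew-symmetry of $A$. First I would record the two structural facts about $A=\Ku(u,v)$. On one hand $A\in\mathrm{so}(V)$, so $A$ is skew-symmetric, $\langle Aw,z\rangle=-\langle w,Az\rangle$. On the other hand, the semi-symmetry hypothesis \eqref{eq1-9}, read through the derivation action \eqref{eq1-6}, says precisely that $A.\Ku=0$, i.e. for all $a,b\in V$,
\[ A\,\Ku(a,b)-\Ku(a,b)\,A=\Ku(Aa,b)+\Ku(a,Ab). \]

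The heart of the argument is to deduce from $A.\Ku=0$ that $A$ annihilates the Ricci form, namely $\ric(Ax,y)+\ric(x,Ay)=0$ for all $x,y$. Using the trace definition $\ric(x,y)=\sum_i\langle \Ku(x,e_i)y,e^i\rangle$ for a basis $(e_i)$ with metric-dual basis $(e^i)$, I would substitute the displayed derivation identity (with $a=x$, $b=e_i$) into $\ric(Ax,y)$. After recognizing the resulting term $\sum_i\langle\Ku(x,e_i)Ay,e^i\rangle$ as $\ric(x,Ay)$, what remains is
\[ \ric(Ax,y)+\ric(x,Ay)=\sum_i\langle A\,\Ku(x,e_i)y,e^i\rangle-\sum_i\langle \Ku(x,Ae_i)y,e^i\rangle. \]
Writing $L:p\mapsto\Ku(x,p)y$ for the linear map occupying the contracted slot, the two sums are exactly $\mathrm{trace}(AL)$ and $\mathrm{trace}(LA)$, so they cancel by cyclicity of the trace and the right-hand side vanishes. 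This cancellation — that the contracted derivation of $\Ku$ is precisely the derivation of $\ric$, which therefore vanishes — is the computation I expect to require the most bookkeeping, chiefly in keeping the basis/dual-basis contractions straight; note that skew-symmetry of $A$ is not even needed here, only the cyclic property of the trace.

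Finally I would translate $\ric(Ax,y)+\ric(x,Ay)=0$ into the operator statement. Writing $\ric(x,y)=\langle\Ri(x),y\rangle$ gives $\langle\Ri(Ax),y\rangle+\langle\Ri(x),Ay\rangle=0$; applying the skew-symmetry of $A$ to the second term turns $\langle\Ri(x),Ay\rangle$ into $-\langle A\,\Ri(x),y\rangle$, whence $\langle\Ri(Ax)-A\,\Ri(x),y\rangle=0$ for every $y$. Nondegeneracy of $\prs$ then forces $\Ri\circ A=A\circ\Ri$, which is exactly \eqref{eq1-10} with $A=\Ku(u,v)$. A conceptual shortcut, which I would mention as an alternative to the index computation, is that $A$ annihilates both $\Ku$ (by hypothesis) and the metric (being skew-symmetric), so it annihilates the contraction $\ric$ built from them; equivalently, the isometry flow $\exp(tA)$ lies in $\mathcal{H}(\Ku)$ and preserves $\Ku$, hence preserves $\ric$, and differentiating at $t=0$ yields $A.\ric=0$.
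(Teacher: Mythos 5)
Your proof is correct and follows essentially the same route as the paper's: both expand the Ricci operator via its trace definition, insert the semi-symmetry identity $A.\Ku=0$ into the contracted slot, kill the resulting commutator term by cyclicity of the trace, and pass from $\ric(Ax,y)+\ric(x,Ay)=0$ to the commutation relation using the skew-symmetry of $A=\Ku(u,v)$. The only cosmetic difference is that the paper computes $\langle \Ri(\Ku(u,v)w),z\rangle$ directly rather than first isolating the statement that $A$ annihilates the Ricci form.
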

\begin{proof} $\;$

Soit $u$, $v$, $w$, $z\in V$.
\begin{eqnarray*}
  \langle \Ri(\Ku(u,v)w,z\rangle &=&\tr(\tau(z,\Ku(u,v)w))\\&=& \tr(t\mapsto \Ku(z,t)(\Ku(u,v)w)) \\
   &=& \tr(t\mapsto \Ku(u,v)(\Ku(z,t)w))-\tr(t\mapsto \Ku(z,\Ku(u,v)t)w)\\
   &&-\tr(t\mapsto \Ku(\Ku(u,v)z,t)w) \\&=&\tr([\Ku(u,v),\tau(z,w)])-\tr(\tau(\Ku(u,v)z,w))\\&=&
   \langle \Ku(u,v)\circ\Ri(w),z\rangle.
\end{eqnarray*}\end{proof}

A Ricci operator $\Ri$ satisfying the equation (\ref{eq1-10}) is said  semi-symmetric. In particular, for a pseudo-Riemannian manifold whose Ricci operator is parallel ($ \na \Ri = 0 $), $ \Ri $ is semi-symmetric. This type of manifolds was studied by C. Boubel (See \cite{boubel}) by giving the following classification theorem:

\begin{theo}\label{tboubel}(\cite{boubel})
	Let $(M,g)$ be a pseudo-Riemannian manifold with a parallel Ricci
	curvature $\Ri$(i.e. $\na.\Ri=0$) and let $\chi$ be the minimal polynomial of $\Ri$. Then:  
   \begin{enumerate}
            \item $\chi=\Pi_iP_i$ where:\begin{enumerate}
            \item [$\bullet$] $\forall  i\neq j$, $P_i\we P_j=1 $ (i.e, $P_i$ et $P_j$ are mutually prime),
     \item [$\bullet$] $\forall  i$, $P_i$ is irreducible or  $P_i=X^2$.
   \end{enumerate}
   \item There is a canonical family $(M_i)_i$ of pseudo-Riemannian manifolds such that the minimal polynomial of $\Ri_i$ on each $M_i$ is $P_i$, and a local isometry $f$ mapping the
   Riemarmian product  $\Pi M_i$ onto $M$. $f$ is unique up to composition with a product of
   isometries of each factor $M_i$. If $M$ is complete and simply connected, $f$ is an isometry.
           \end{enumerate}
\end{theo}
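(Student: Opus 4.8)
The plan is to regard $\Ri$ as a parallel, $g$-self-adjoint field of endomorphisms and to reduce the theorem to three ingredients: the linear algebra of such a field, a holonomy (commutant) argument, and the de Rham--Wu splitting theorem.

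First I would record the algebraic content of $\na\Ri=0$. Since $\Ri$ is parallel, the coefficients of its characteristic polynomial are parallel, hence locally constant, so the whole Jordan-type data of $\Ri$ at one point is carried to every point by parallel transport; in particular the minimal polynomial $\chi$ is well defined on $M$. Applying the Ricci identity to the $(1,1)$-tensor $\Ri$ gives $R(X,Y)\cdot\Ri=\na_X\na_Y\Ri-\na_Y\na_X\Ri-\na_{[X,Y]}\Ri=0$, i.e. $[R(X,Y),\Ri]=0$ for all $X,Y$; thus $\Ri$ commutes with every curvature operator and hence with the whole holonomy algebra. (This is the parallel analogue of Lemma \ref{Le1-32}.)

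Second, I would split $M$ using self-adjointness. Factoring $\chi$ over $\R$ into prime powers $P_i:=Q_i^{k_i}$, the primary decomposition $T_mM=\bigoplus_i\ker P_i(\Ri)$ is $\Ri$-invariant; because $\Ri$ is self-adjoint and the $P_i$ are pairwise coprime, the components are mutually orthogonal and nondegenerate (the pairing between two coprime primary components vanishes by coprimality). As each $P_i(\Ri)$ is parallel, its kernel is a parallel nondegenerate distribution, and these distributions are orthogonal and span $TM$. This yields the pairwise-coprime factorization $\chi=\prod_iP_i$ of part (1), a priori only with $P_i$ a prime power, and the de Rham--Wu theorem integrates this orthogonal family into a local isometry $f$ from a pseudo-Riemannian product $\prod_iM_i$ onto $M$, on which $\Ri$ restricts to a parallel operator of minimal polynomial $P_i$ on $M_i$; uniqueness up to permutation and factorwise isometries is the uniqueness clause of that theorem, and completeness plus simple connectedness promote $f$ to a global isometry by the standard monodromy argument.

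It remains to show that each $P_i$ is irreducible or equal to $X^2$, and this is where the main difficulty lies. Refining the splitting to indecomposable factors, on such a factor the restricted holonomy acts indecomposably, and by $[R(X,Y),\Ri]=0$ the restriction $\Ri_i$ lies in its commutant; since an indecomposable holonomy module admits no nontrivial idempotent endomorphism while the spectral projections of $\Ri_i$ are polynomials in $\Ri_i$ (hence lie in that commutant), the minimal polynomial of $\Ri_i$ is forced to be a single prime power $Q^m$. Pure self-adjointness does not bound $m$, since with respect to an indefinite form one can build self-adjoint Jordan blocks of arbitrary size; the refinement therefore has to exploit the indefinite metric together with the fact that $\Ri_i$ is a genuine, parallel Ricci operator. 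The step I expect to be the real obstacle is to prove that the nilpotent part $N$ of $\Ri_i-\la\,\mathrm{Id}$ either vanishes, forcing $P_i=Q$ irreducible, or satisfies $\la=0$ and $N^2=0$; I would approach this by analysing the signature the metric is compelled to carry along the $\Ri_i$-Jordan chains, using self-adjointness and parallelism, and exploiting the special role of the eigenvalue $0$, where a self-orthogonal Jordan chain of length $2$ is compatible with a neutral $2$-plane whereas longer chains, or any chain attached to a nonzero eigenvalue, are obstructed. This case analysis is the technical heart of the statement.
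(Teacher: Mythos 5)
First, a point of reference: the paper does not actually prove Theorem \ref{tboubel}; it is imported from Boubel's thesis \cite{boubel}, and the only comment the paper offers on its proof is that part (1) uses nothing beyond the commutation of $\Ri$ with every curvature endomorphism $R(u,v)$, together with the fact that $\Ri$ is the Ricci operator \emph{of} $R$. Measured against that, the portions of your argument that you do carry out are sound and standard: the Ricci identity giving $[R(X,Y),\Ri]=0$, the primary decomposition of $TM$ into mutually orthogonal, nondegenerate, parallel distributions $\ker P_i(\Ri)$, the reduction to a single prime power on each indecomposable factor via self-adjoint spectral idempotents, and the appeal to the de Rham--Wu splitting theorem for part (2) with its uniqueness and completeness clauses.

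The genuine gap is exactly where you place it, and the strategy you sketch for closing it cannot succeed as described. To show that each prime power $Q^m$ is in fact $Q$ or $X^2$, you propose to analyse "the signature the metric is compelled to carry along the Jordan chains, using self-adjointness and parallelism"; but, as you concede in the same sentence, a parallel self-adjoint endomorphism on an indefinite indecomposable factor can have Jordan blocks of arbitrary size, so no signature bookkeeping on the chains of an abstract parallel self-adjoint operator can produce the bound $m\leq 1$ (or $m\leq 2$ for $Q=X$). The ingredient that actually closes the case analysis is the one the paper points to: the trace identity $\ric(u,v)=\mathrm{trace}\big(a\mapsto R(u,a)v\big)$, combined with the invariance of each primary component under every $R(a,b)$ and the pair symmetry $\langle R(a,b)u,v\rangle=\langle R(u,v)a,b\rangle$, which forces $R$ to vanish on mixed pairs and lets one compute the offending matrix entries of $\Ri$ as traces that vanish. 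This is precisely the mechanism driving the paper's own proofs of Lemma \ref{Le1-32}, of item 5 of Theorem \ref{pr1-3-1}, and of Proposition \ref{pr1} (real eigenvalues in the Lorentzian case), all of which use an adapted orthogonal frame and a trace computation. Without an argument of this type, the assertion that each $P_i$ is irreducible or equal to $X^2$ --- which is the entire nontrivial content of part (1) --- remains unproved in your write-up.
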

For the proof of the first result of this theorem above, {C. Boubel} only used the following hypothesis: On the tangent space $ T_xM $ at each point $ x \in M $,  the Ricci operator $ \Ri_x $ commutes with the endomorphisms $ R_x (u, v) $ for all $ u $, $ v \in T_xM $. A result which remains valid for spaces provided with semi-symmetrical curvatures and more particularly, for spaces with semi-symmetrical   Ricci. Thus, the following result:

 \begin{theo}\label{pr1-3-1}
 Let $ \Ku $ be a semisymmetric curvature tensor on a pseudo-Riemannian space $ (V, \prs) $  and let $ \chi $ be the minimal polynomial of its Ricci operator $\Ri$. Then the following properties are checked:
   \begin{enumerate}
            \item $\chi=\Pi_iP_i$  where:\begin{enumerate}
            \item [$\bullet$] $\forall  i\neq j$, $P_i\we P_j=1 $,(i.e, $P_i$ et $P_j$ are mutually prime),
            \item [$\bullet$] $\forall  i$, $P_i$ is irreducible or  $P_i=X^2$.
   \end{enumerate} \item $V$ splits orthogonally as \begin{equation}\label{dec-V-Ric} V=E_0\oplus E_1\oplus\ldots\oplus E_r,\end{equation} where $E_0=\ker((\mathrm{Ric}^2))$ and $E_i=\ker(P_i(\mathrm{Ric}))$,
\item for any $u,v\in V$ and $i=0,\ldots,r$, $E_i$ is ${\mathfrak{h} }(\Ku)$-invariant,
\item for any $i,j=0,\ldots,r$ with $i\not=j$, $\Ku_{|E_i\wedge E_j}=0$,
\item for any $i=1,\ldots,r$, $\dim E_i\geq2$.
\item The primitive holonomy algebra $\h(\Ku)$ is satisfied;  $$\h(\Ku)=\h_0(\Ku)+\h_1(\Ku)+...+\h_r(\Ku),$$ 
where $\h_i(R):=\{\Ku(X,Y)\;/\; X,Y\in E_i\}$ is a Lie subalgebra for all $0\leq i\leq r$.
\end{enumerate}

 \end{theo}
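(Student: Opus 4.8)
The plan is to rest everything on Lemma~\ref{Le1-32}, which asserts $\Ku(u,v)\circ\Ri=\Ri\circ\Ku(u,v)$, together with the symmetry relation \eqref{eq1-5} and the semi-symmetry identity \eqref{eq1-9}. Since $\Ri$ is self-adjoint for $\prs$, I would begin from its real primary decomposition: writing $\chi=\prod_j Q_j^{m_j}$ as a product of powers of pairwise coprime $\R$-irreducible polynomials, $V$ splits as the $\prs$-orthogonal direct sum of the generalized eigenspaces $\ker\big(Q_j(\Ri)^{m_j}\big)$, each nondegenerate (orthogonality of the primary components of a self-adjoint operator is standard, via coprimality plus self-adjointness). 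This already yields the orthogonal splitting \eqref{dec-V-Ric} once the block structure of Part~1 is known, so I would dispose of Parts~3--6 first, as they hold for the primary pieces regardless of the block sizes, and isolate Part~1 as the one genuinely hard step.

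For Part~3, each $E_i=\ker P_i(\Ri)$ is invariant because $P_i(\Ri)$ is a polynomial in $\Ri$ and hence commutes with every $\Ku(u,v)$ by Lemma~\ref{Le1-32}, so $\Ku(u,v)$ preserves $\ker P_i(\Ri)$. For Part~4, given $x\in E_i$, $y\in E_j$ with $i\neq j$, I would use \eqref{eq1-5} in the form $\langle\Ku(x,y)u,v\rangle=\langle\Ku(u,v)x,y\rangle$: the right-hand side vanishes because $\Ku(u,v)x\in E_i$ by Part~3 while $y\in E_j$ and $E_i\perp E_j$. As $u,v$ are arbitrary, nondegeneracy forces $\Ku(x,y)=0$, i.e. $\Ku_{|E_i\we E_j}=0$. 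Two consequences follow. Writing the Ricci contraction $\langle\Ri(u),v\rangle=\sum_k\epsilon_k\langle\Ku(u,e_k)v,e_k\rangle$ in a basis adapted to the splitting, the summand dies unless $e_k\in E_i$ (by Part~4), so $\Ri$ preserves each $E_i$, its restriction is exactly the Ricci operator of $\Ku_{|E_i\we E_i}$, and a flat direction $x$ (one with $\Ku(x,\cdot)=0$) is automatically Ricci-flat. This settles Part~5: if some $E_i$ with $i\geq1$ were one-dimensional, say $E_i=\R x$ with $\Ri x=\la x$ and $\la\neq0$, then $\Ku(x,\cdot)=0$ (off-diagonal blocks vanish and $\Ku(x,x)=0$), making $x$ Ricci-flat and contradicting $\la\neq0$; hence $\dim E_i\geq2$.

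Part~6 is then formal. Since $\we^2V=\big(\bigoplus_i\we^2E_i\big)\oplus\big(\bigoplus_{i<j}E_i\we E_j\big)$ and $\Ku$ kills the mixed terms, $\h(\Ku)=\Ku(\we^2V)=\sum_i\Ku(\we^2E_i)=\sum_i\h_i(\Ku)$. Each $\h_i(\Ku)$ is a subalgebra because for $x,y,a,b\in E_i$ the identity \eqref{eq1-9} rewrites $[\Ku(x,y),\Ku(a,b)]$ as $\Ku(\Ku(x,y)a,b)+\Ku(a,\Ku(x,y)b)$, which lies in $\h_i(\Ku)$ since $\Ku(x,y)$ preserves $E_i$ by Part~3.

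The hard part will be Part~1: proving that in $\chi=\prod_jQ_j^{m_j}$ one has $m_j=1$ whenever $Q_j\neq X$, while the factor $X$ may occur to order at most two (so each $P_i$ is irreducible or $P_i=X^2$, with $E_0=\ker\Ri^2$). Here I would follow Boubel's argument behind Theorem~\ref{tboubel}, which, as the paper notes, uses only the commutation $\Ku(u,v)\circ\Ri=\Ri\circ\Ku(u,v)$ of Lemma~\ref{Le1-32}. Reducing to a single generalized eigenspace $W=\ker\big((\Ri-\la)^{m}\big)$ for real $\la$ (or its irreducible-quadratic analogue), on which $\Ri$ restricts to the Ricci of $\Ku_{|W}$ and decomposes as $\la\,\mathrm{Id}+N$ with $N$ self-adjoint, nilpotent, and commuting with $\h(\Ku_{|W})$, the task becomes to show $N=0$ when $\la\neq0$ and $N^2=0$ when $\la=0$. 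The essential obstacle is exactly this: self-adjointness alone permits Jordan blocks (a chain merely forces its top eigenvector to be $\prs$-null), so one must feed the semi-symmetry identity \eqref{eq1-9} back in, relating $N$ to the curvature, to rule out higher nilpotency and to confine the nilpotent part to the zero eigenvalue. Once this is in hand, regrouping the primary components into $E_0=\ker\Ri^2$ and $E_i=\ker P_i(\Ri)$ delivers the minimal-polynomial form of Part~1 and, together with the orthogonality established above, the splitting \eqref{dec-V-Ric} of Part~2.
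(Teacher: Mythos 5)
Your proposal is correct and follows essentially the same route as the paper: Part 1 is delegated to Boubel's argument (which uses only the commutation of Lemma~\ref{Le1-32}), Parts 2--4 come from the primary decomposition of the self-adjoint $\Ri$, the commutation relation, and the symmetry \eqref{eq1-5}, Part 5 from the vanishing of the Ricci contraction on a one-dimensional invariant factor, and Part 6 from the block decomposition of $\we^2V$ together with \eqref{eq1-9}. Your write-up of Parts 5 and 6 is in fact slightly more explicit than the paper's (which leaves Part 6 unproved), and your only divergence is the cautionary remark that ruling out higher nilpotency may require feeding in \eqref{eq1-9} beyond mere commutation, whereas the paper asserts commutation alone suffices.
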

 \begin{Def} The decomposition (\ref{dec-V-Ric}) is called   Ricci decompostion of $V$.
	
\end{Def}
\begin{rem}
  If $P_0=X$, we get that $\ker(\mathrm{Ric})=\ker(\mathrm{Ric}^2)$.
  That's why we always set $E_0=\ker((\mathrm{Ric}^2))$.
\end{rem}
 \begin{proof}$\;$

 \begin{enumerate} \item  
In the proof of the theorem \ref{tboubel}, \textbf {C. Boubel} only used the commutativity of the Ricci operator with the each endomorphisms $ R (u,v) $ which remains valid for the first result of our theorem. 

\item Let  $\chi=\Pi_iP_i$ be the irreducible decomposition of $\chi$.

Then, $V$  splits orthogonally as:
\[V=\bigoplus_iE_i,~~ \text{ where }~~ E_i=\ker(P_i(\Ri)).\] 
  \item Let $(u,v)\in V^2$. So $\Ku(u,v)$ commutes  with  $\Ri$, then each  $E_i$ is invariant by $\Ku(u,v)$.
\item Let $u\in E_i$,  $v\in E_j$ and $a,b\in V$ such that $i\neq j$. Since $ \Ku(a,b)(E_i)\subset E_i$ and $\langle E_i,E_j\rangle=0$, we obtain
\[ 0=\langle  \Ku(a,b)u,v\rangle\stackrel{\eqref{eq1-5}}=\langle  \Ku(u,v)a,b\rangle \]  and then, $ \Ku(u,v)=0$.
\item We assume that it exists $i\in \{ 1,\ldots,r\}$ such that $\dim E_i=1$. We choose an element $e$ in $E_i$  checking $\langle e,e\rangle=\e$ with $\e^2=1$ and we complete it to an orthonormal basis $(e,e_1,\ldots,e_{n-1})$ with $\langle e_i,e_i\rangle=\e_i$ and  $\e_i^2=1$. For all $a,b\in V$, we get that $ \Ku(a,b)$ is skewsymmetric endomorphism which leaves $ E_i $ invariant. Then $ \Ku(a,b)e=0$, while
\[ \e\al_i=\langle \mathrm{Ric}(e),e\rangle=\e\langle \Ku(e,e)e,e\rangle+\sum_{i=1}^{n-1}\e_i\langle  \Ku(e,e_i)e,e_i\rangle=0, \]
which is absurd and this completes the proof of the theorem.
 \qedhere
\end{enumerate}

\end{proof}

This proposition reduces  the determination of semi-symmetric curvature tensors on pseudo-riemannian vector spaces to the determination of two classes of semi-symmetric curvature tensors:
 \begin{enumerate}
    \item Einstein semi-symmetric curvature tensors  ( with  complexification in the case of non-real complex eigenvalues)
    \item  Semi-symmetric curvature tensors with  Ricci operator  is satisfied   $\Ri^2=0$.
                                                       \end{enumerate}
\subsection{ Semi-symmetrical Lorentzian space}

\begin{pr}\label{pr1}(\cite{benromane})
Let $\Ku$ be a semi-symmetric curvature tensor on a Lorentzian vector space $(V,\prs)$. Then all eigenvalues of $\mathrm{Ric}_K$ are real. Denoted by $\al_1,\ldots,\al_r$ the non null eigenvalues and $E_1,\ldots,E_r$ the corresponding eigenspaces. Then:
\begin{enumerate}\item $V$ splits orthogonally as $V=E_0\oplus E_1\oplus\ldots\oplus E_r$, where $E_0=\ker(\mathrm{Ric}^2)$,
	\item for any  $i=0,\ldots,r$, $E_i$ is ${\mathfrak{h} }(\Ku)$-invariant,
	\item for any $i,j=0,\ldots,r$ with $i\not=j$, $\Ku_{|E_i\wedge E_j}=0$,
	\item for any $i=1,\ldots,r$, $\dim E_i\geq2$.
	
\end{enumerate}
 Moreover, the primitive holonomy algebra $\h(\Ku)$ splits orthogonally:  $$\h(\Ku)=\h_0(\Ku)+\h_1(\Ku)+...+\h_r(\Ku),$$ 
	where $\h_i(\Ku):=\{\Ku(u,v)\;/\; u,v\in E_i\}$ is a Lie  subalgebra  for all $0\leq i\leq r$.\\
	More precisely,  one of the following two situations occurs: \begin{enumerate}
		\item[a)] Ricci is diagonlizable; $E_0=\ker(\Ri)$.
	\item[b)] Ricci is of   isotropic type: $\ker(\Ri)\subsetneq E_0=\ker(\Ri^2)$.
		\end{enumerate} 
\end{pr}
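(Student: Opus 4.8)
The plan is to treat the structural conclusions and the two sharper statements separately, since items (1)--(4) together with the splitting $\h(\Ku)=\h_0(\Ku)+\ldots+\h_r(\Ku)$ are nothing but the specialization of Theorem \ref{pr1-3-1} to the Lorentzian signature and hold verbatim in any signature. Thus the genuinely new content is (A) that every eigenvalue of $\Ri$ is real, and (B) the dichotomy between the diagonalizable case $E_0=\ker(\Ri)$ and the isotropic case $\ker(\Ri)\subsetneq E_0=\ker(\Ri^2)$. Throughout I would lean on the factorization $\chi=\Pi_iP_i$ of the minimal polynomial of $\Ri$ furnished by Theorem \ref{pr1-3-1}, in which each $P_i$ is either irreducible or equal to $X^2$, and on the orthogonal decomposition $V=E_0\oplus E_1\oplus\ldots\oplus E_r$ with $E_i=\ker(P_i(\Ri))$.

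For (A), I would argue by contradiction: suppose some factor $P_i$ is an irreducible quadratic, i.e. it has a conjugate pair of non-real roots. The associated summand $E_i=\ker(P_i(\Ri))$ is then a non-degenerate, $\h(\Ku)$-invariant subspace on which $\Ri$ has minimal polynomial $P_i$, hence carries no real eigenvalue. Here the Lorentzian hypothesis enters decisively: a self-adjoint endomorphism of a non-degenerate subspace of a Lorentzian space can fail to possess a real eigenvalue only on a \emph{neutral} block, and in signature $(1,n-1)$ a neutral subspace has dimension at most $2$; consequently $\dim E_i=2$ with the induced metric of signature $(1,1)$. Concretely, if $\Ri w=\la w$ with $\la\notin\R$ and $w=u+iv$, self-adjointness forces $\langle w,\bar w\rangle=0$, that is $\langle u,u\rangle+\langle v,v\rangle=0$, which pins down the signature of $\mathrm{span}\{u,v\}$. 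I would then restrict the curvature to this plane: by Theorem \ref{pr1-3-1} one has $\Ku(E_i,E_j)=0$ for $j\neq i$, so $\Ku_i:=\Ku|_{E_i\we E_i}$ is itself a curvature tensor on the $2$-dimensional space $E_i$ whose Ricci operator is exactly $\Ri|_{E_i}$. In dimension $2$ every curvature tensor is Einstein, $\Ri|_{E_i}=\kappa\,\mathrm{Id}_{E_i}$ for a real scalar $\kappa$, so $\Ri|_{E_i}$ has a real eigenvalue --- contradicting the choice of $P_i$. Hence no irreducible quadratic factor occurs and all eigenvalues of $\Ri$ are real.

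Once (A) is in hand, (B) is short bookkeeping on $\chi$. Since every irreducible factor is now linear, the only admissible non-linear factor is $X^2$, and coprimality of the $P_i$ forbids $X$ and $X^2$ from occurring simultaneously. Either $X^2$ does not occur, in which case $\chi$ is a product of pairwise distinct linear factors and $\Ri$ is diagonalizable with $E_0=\ker(\Ri)=\ker(\Ri^2)$, which is case a); or $X^2$ occurs, in which case $\Ri|_{E_0}$ is nilpotent with $\Ri^2|_{E_0}=0\neq\Ri|_{E_0}$, giving $\ker(\Ri)\subsetneq E_0=\ker(\Ri^2)$ and an isotropic Ricci operator, which is case b). The remaining assertions --- the orthogonal splitting, the $\h(\Ku)$-invariance of each $E_i$, the vanishing $\Ku|_{E_i\we E_j}=0$, the bound $\dim E_i\geq2$, and the splitting of $\h(\Ku)$ --- are read off directly from Theorem \ref{pr1-3-1}.

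The main obstacle is the reality step (A): the delicate point is not the linear algebra of self-adjoint operators per se but the coupling of the Lorentzian signature constraint (a neutral block has dimension at most $2$) with the two-dimensional Einstein property, which is precisely what collapses a would-be complex eigenvalue to a real one. I would pay particular attention to verifying that $\Ri|_{E_i}$ genuinely coincides with the Ricci operator of the restricted tensor $\Ku_i$; this relies on the orthogonality of the decomposition and on the vanishing $\Ku(E_i,E_j)=0$, so that the trace defining $\ric$ receives contributions only from within $E_i$.
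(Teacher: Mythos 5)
Your proposal is correct. Both you and the paper reduce items (1)--(4) and the splitting of $\h(\Ku)$ to Theorem \ref{pr1-3-1} and concentrate entirely on ruling out non-real eigenvalues of $\Ri$, and both arguments pivot on the same geometric fact: a conjugate pair $z,\bar z$ forces a $\Ri$-invariant plane $E=\mathrm{span}\{e,\bar e\}$ of signature $(1,1)$, which in Lorentzian signature is as large as a neutral block can be. Where you diverge is the finishing move. The paper computes the off-diagonal Ricci entry directly, $b=\langle\Ri(\bar e),e\rangle=\sum_k\epsilon_k\langle\Ku(\bar e,f_k)e,f_k\rangle$, and kills each term using skew-symmetry of $\Ku(\bar e,e)$ together with the stability of $E$ and $E^\perp$ under every $\Ku(u,v)$; you instead restrict $\Ku$ to the two-dimensional block (legitimate, as you note, because $\Ku(E_i,E_j)=0$ makes the Ricci of the restricted tensor equal to $\Ri|_{E_i}$) and invoke the fact that every curvature tensor in dimension $2$ is Einstein with a real constant. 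The two routes use the same inputs and are of comparable length; yours has the small advantage of making explicit why $\dim E_i=2$ via the bound on neutral subspaces --- the paper's assertion that $E$ and $E^\perp$ are stable under all $\Ku(u,v)$ silently relies on $E$ being the whole block $E_i$, which is precisely that bound --- while the paper's trace computation avoids having to check that the restriction is again a curvature tensor. Your derivation of the dichotomy a)/b) from the admissible factors of the minimal polynomial is also sound and supplies the bookkeeping the paper leaves implicit.
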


\begin{proof}This is a special case of theorem.\ref{pr1-3-1}
and it suffices to show that Ricci has only real eigenvalues, this is equivalent to showing that Ricci's minimal polynomial has no non-real roots.
	
Suppose that $ \Ri $ admits a non-real eigenvalue $ z = a + ib $ where $ b \neq0 $. Then, $ z $ and $ \bar{z} $ are roots of an irreducible factor of degree $ 2 $ of the minimal Ricci polynomial.\\
 Let $e$ and $\overline{e}$ in $V$ such that \[\langle e,e\rangle=-\langle\overline{e},\overline{e}\rangle=1,\; \langle {e},\overline{e}\rangle=0,\; \mathrm{Ric}(e)=ae-b\overline{e}\esp
	\mathrm{Ric}(\overline{e})=be+a\overline{e}.  \]
	Then, the subspace $E=\mathrm{span}\{ e, \overline{e}\}$ and its orthogonal subspace $E^\perp$ are  stable by $\Ri$ and by  endomorphisms $\Ku(u,v)$ for all $u$ and $v$ in $V$. Let   $\B=(e_1,\ldots,e_{n-2} )$ be a   orthonormally basis of  $E^\perp$.

	Then, we have:
	\begin{eqnarray*}
		b&=&\langle \mathrm{Ric}(\overline{e}),e\rangle\\
		&=&\langle \Ku(\overline{e},e)e,e\rangle-\langle \Ku(\overline{e},\overline{e})e,\overline{e}\rangle+\sum_{i=1}^{n-2}\langle \Ku(\overline{e},e_i)e,e_i\rangle\\&=&0,
	\end{eqnarray*} which contradicts the fact that $b\not=0$.
\end{proof}

\begin{rem}
	In \cite{benromane},  the author had demonstrated this proposition with different techniques.  
\end{rem}
\begin{Le}\label{lema24}
	Let $\Ku$ be a semi-symmetric curvature tensor on the Lorentzian vector space $(V,\prs)$. Then the prmitive decomposition of $V$ is given as; $$V=\Big(V_0+ V_1+V_0'\Big)\oplus V_2\oplus\ldots\oplus V_s,$$ where $V_0'$ is the dual subspace of $V_0\cap V_1$.
	 
		Therefore,
	  $V_i$ is a Riemannian subspace of dimension greater than or equal to $2$ for any  $i\geq 2$.\\	  
	 And so, 
	 \begin{enumerate}
	 	\item If $E_0$ is a riemannian space, then $V_0=E_0$ is also riemannian space and for  all $i\geq 1$, the   $V_i$ is Einstein space. 
	 		\item If $\Ku$ has a tensor Ricci   of isotropic type, i;e,  $E_0=\ker(\Ri^2)$,  then $E_0=V_0+V_1+V_0'$ such that  $V_0'$ is of dimension $1$ and for  all $i\geq 2$, the   $V_i$ is Einstein Riemannain space.
	 	\end{enumerate}
	 	   
\end{Le}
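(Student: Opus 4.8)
The plan is to feed the general primitive decomposition \eqref{eqv9} through two facts special to this setting: the index of $\prs$ equals $1$, and, by Lemma \ref{Le1-32}, $\Ri$ commutes with every $\Ku(u,v)$ and hence with all of $\h(\Ku)$, so $\Ri$ preserves each summand of \eqref{eqv9}. I would first record that the completed blocks $V_i+(V_i\cap V_i^{\perp})'$ ($i\geq1$) are nondegenerate, mutually orthogonal and $\h(\Ku)$-indecomposable, and that the degenerate pairing of the isotropic part $V_0\cap(V_1+\ldots+V_s)$ with its dual $V_0'$ splits off as hyperbolic planes. Since the index is $1$ and these summands are orthogonal, exactly one of them can carry the index while all the others are positive definite. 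That index-carrying summand is either nondegenerate and Lorentzian, in which case no degenerate pairing is needed and $V_0'=\{0\}$, or it is weakly irreducible with a one-dimensional isotropic invariant line paired with its dual, in which case $\dim V_0'=1$. Re-indexing so that this exceptional summand is built on $V_1$, I obtain $V=(V_0+V_1+V_0')\oplus V_2\oplus\ldots\oplus V_s$ with every $V_i$ ($i\geq2$) Riemannian.

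Next I would show $\dim V_i\geq2$ for $i\geq2$ exactly as in Theorem \ref{pr1-3-1}(5): a nondegenerate $\h(\Ku)$-invariant line $\R e\subseteq V_i$ forces $\Ku(u,v)e\in\R e$ and, by skew-symmetry, $\langle\Ku(u,v)e,e\rangle=0$, whence $\Ku(u,v)e=0$ and $e\in V_0$, contradicting $V_i\cap V_0=\{0\}$. For the Einstein property I would observe that on any nondegenerate indecomposable block the self-adjoint $\Ri$, when diagonalizable, is scalar: its eigenspaces are orthogonal, nondegenerate and $\h(\Ku)$-invariant (since $\Ri$ commutes with $\h(\Ku)$), so two distinct eigenvalues would split the block into a proper orthogonal $\h(\Ku)$-invariant sum, against indecomposability. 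By Proposition \ref{pr1}, $\Ri$ has real eigenvalues and is diagonalizable on every positive-definite block, and also on the Lorentzian block whenever $V_0'=\{0\}$; each such block is therefore Einstein.

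I would then read off the dichotomy through Proposition \ref{pr1}. The case $V_0'=\{0\}$ is the diagonalizable one: $\Ri$ vanishes on $V_0$ and is a scalar on each $V_i$, $i\geq1$, this scalar being nonzero because, by Simons' theorem on symmetric holonomy systems \cite{simon}, a nonflat irreducible semi-symmetric block is the holonomy system of an irreducible symmetric space and hence Einstein with nonzero constant, a vanishing constant forcing the block into $V_0$. Consequently $\ker(\Ri)=V_0$, so $V_0=E_0$ is Riemannian and all $V_i$, $i\geq1$, are Einstein. The case $\dim V_0'=1$ is the isotropic one: the nonzero self-adjoint operator $\Ri|_{E_0}$ satisfies $(\Ri|_{E_0})^2=0$, and on an index-$1$ space such an operator has a single $2\times2$ nilpotent Jordan block, its image being the isotropic line $V_0\cap V_1\subseteq\ker(\Ri)$; matching this with the core gives $E_0=V_0+V_1+V_0'$ with $\dim V_0'=1$, while the remaining $V_2,\ldots,V_s$ are Einstein and Riemannian by the previous step.

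The step I expect to be delicate is the bookkeeping of the degenerate part in the isotropic case: one must verify that the single isotropic direction produced by the index-$1$ constraint on $\Ri|_{E_0}$ is precisely the line $V_0\cap V_1$ whose dual is $V_0'$, so that the primitive block $V_0+V_1+V_0'$ coincides with the Ricci block $E_0=\ker(\Ri^2)$ and $\dim V_0'=1$. This is exactly where index $1$ is indispensable, since in higher index several independent isotropic directions could appear and the conclusion would break down. The non-Ricci-flatness of the Riemannian blocks, although standard via Simons' theorem, is the other point that must be invoked with care.
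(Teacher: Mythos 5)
The paper states Lemma \ref{lema24} without giving any proof, so there is nothing of the author's to compare your argument against; the following assesses your proposal on its own terms. Your structural ideas are the right ones: the index-$1$ pigeonhole on the mutually orthogonal summands of the primitive decomposition (a totally isotropic subspace of a Lorentzian space is at most a line, so $\dim V_0'\leq 1$ and at most one block fails to be positive definite), the invariant-line argument for $\dim V_i\geq 2$ (correctly replacing the Ricci-eigenvalue argument of Theorem \ref{pr1-3-1}, which is unavailable here since the $V_i$ need not have nonzero Ricci), the commutant-plus-indecomposability argument for the Einstein property, and Simons' theorem to exclude Ricci-flat non-flat positive-definite blocks.

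There are, however, two genuine gaps. First, you organize the dichotomy around $V_0'=\{0\}$ versus $\dim V_0'=1$, but the lemma's items are conditioned on ``$E_0$ Riemannian'' and ``Ricci of isotropic type'', which is not the same partition: your deduction ``$V_0'=\{0\}$, hence $\ker(\Ri)=V_0$, so $V_0=E_0$ \emph{is Riemannian}'' is a non sequitur, since with $V_0'=\{0\}$ the timelike direction may sit inside $V_0=E_0$ (take $\Ku=0$). In item 1 the Riemannian-ness of $E_0$ is the hypothesis, and it is from that hypothesis that $\Ri|_{E_0}$ is a nilpotent self-adjoint operator on a positive-definite space, hence zero, hence $\Ri$ is diagonalizable; your appeal to Proposition \ref{pr1} to get diagonalizability on the Lorentzian block ``whenever $V_0'=\{0\}$'' is circular, because the equivalence between ``no degenerate primitive block'' and ``Ricci diagonalizable'' is essentially part of what is being proved. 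Second, and more seriously, in the isotropic case you must show that the isotropic line $\mathrm{Im}(\Ri|_{E_0})$ is actually \emph{annihilated} by $\h(\Ku)$, not merely invariant under it, in order to conclude that it equals $V_0\cap V_1$ and that the degenerate primitive block $V_0+V_1+V_0'$ coincides with $\ker(\Ri^2)$; relatedly, a nondegenerate Lorentzian indecomposable block could a priori be weakly irreducible with an invariant isotropic line on which the holonomy acts by scaling, a possibility your case split silently excludes. You flag this as the delicate step, but ``matching this with the core'' is not an argument; one needs the structure of indecomposable Lorentzian symmetric holonomy systems (Cahen--Wallach type, or the Bergery--Ikemakhen classification that the author invokes later in Proposition \ref{proposition41}) to rule out the scaling action. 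Until that is supplied, the identification $E_0=V_0+V_1+V_0'$ with $\dim V_0'=1$ is not established.
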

 \section{Semi-symmetric pseudo-Riemannian differential manifolds}

Let $ (M, g) $ be a pseudo-Riemannian differential manifold of dimension $ n $. Let $ \na $, $ R $, $ \mathfrak{ric} $ and $ \Ri $ are respectively, the Levi-Civita connection, the Riemann curvature, the tensor and the Ricci operator associated to the metric $ g $. We set $ {\cal X}(M) $ the set of vector fields on $ M $. 

So  $(M,g)$   is semisymmetric manifold. Then, at each point
 $m\in M$, the restriction $ R_m $ of $ R $ on $ T_mM $ is a semi-symmetric curvature tensor  and the minimal polynomial of $ \Ri_m $ is of the form $ \X = \prod_{i = 0}^r P_i $ where  the polynomials $ (P_i) _i $ are mutually prime  and for any $ i $, $ P_i $ is irreducible or $ P_i = X^2 $. We can assume that $ P_i $ is irreducible for all $ i \geq1 $. We define the distributions:
  \[E_0(m):=\ker(\Ri_m^2)~~\text{ et}  ~~E_i(m):=\ker(P_i(\Ri_m))~~\text{ for }~~i\geq 1,\]
 and we have the following proposition:

     \begin{pr}\label{pr121} The distributions $(E_i)_i$  checked the following properties:

     For all, $i$, $j\geq 1$ whit $i\neq j$, we have:
     \begin{equation}\label{l}
             \na_{E_j}E_i\subset E_i, \;  \na_{E_i}E_i\subset E_0+E_i, \;
             \na_{E_0}E_i\subset E_i, \;
              \na_{E_0}E_0\subset E_0, \;  \na_{E_i}E_0\subset E_0+E_i.
         \end{equation}

     \end{pr}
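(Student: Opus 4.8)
The plan is to translate the pointwise algebraic information of Theorem~\ref{pr1-3-1} (the orthogonal splitting $TM=E_0\oplus E_1\oplus\ldots\oplus E_r$, the $\h(R)$-invariance of each $E_i$, and the vanishing $R_{|E_i\we E_j}=0$ for $i\neq j$) into differential information by controlling $\na\Ri$. Working on the open dense set where the multiplicities $\dim E_i$ are locally constant, the $E_i$ are smooth distributions and the orthogonal projections $\pi_i\colon TM\to E_i$ are smooth fields of symmetric endomorphisms with $\sum_i\pi_i=\mathrm{Id}$ and $\Ri=\sum_{i\geq1}\al_i\pi_i$ on the semisimple part (the factor $E_0=\ker(\Ri^2)$ being treated through $\Ri^2$). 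First I would record the elementary eigenprojection identity: for a section $Y$ of $E_l$ and any $X$,
\[(\Ri-\al_l)(\na_XY)=(X\al_l)\,Y-(\na_X\Ri)(Y),\]
so that, projecting with $\pi_k$ for $k\neq l$ (where $\al_k\neq\al_l$ because the $P_i$ are mutually prime and $\al_0=0$),
\[\pi_k(\na_XY)=\frac{1}{\al_l-\al_k}\,\pi_k\big((\na_X\Ri)(Y)\big).\]
This reduces all five inclusions to the vanishing of suitable off-diagonal blocks $\pi_k(\na_X\Ri)\pi_l$ of $\na\Ri$, an entirely analogous computation with $\Ri^2$ handling every block meeting $E_0$. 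Tabulating the requirements shows the five inclusions are equivalent to exactly two statements: (Core~1) $\pi_k(\na_X\Ri)\pi_l=0$ for all $X$ whenever $k\neq l$ with $k,l\geq1$; and (Core~2) $\pi_k(\na_X\Ri)\pi_0=\pi_0(\na_X\Ri)\pi_k=0$ for $k\geq1$ whenever $\pi_kX=0$. Conversely, the blocks that are \emph{not} forced to vanish, namely the diagonal blocks $\pi_i(\na_X\Ri)\pi_i=(X\al_i)\pi_i$ and the blocks $\pi_0(\na_X\Ri)\pi_i$ with $X$ tangent to $E_i$, are exactly what produce the admissible leakage $\na_{E_i}E_i\subset E_0+E_i$ and $\na_{E_i}E_0\subset E_0+E_i$.

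The two core vanishing statements are then established from the once-contracted second Bianchi identity, which I will write as
\[(\na_X\Ri)(Y)-(\na_Y\Ri)(X)=\sum_a\e_a(\na_{e_a}R)(X,Y)e_a=:D(X,Y),\]
for a local orthonormal frame $(e_a)$ with $\langle e_a,e_a\rangle=\e_a$. Since $\langle(\na_X\Ri)U,W\rangle$ is symmetric in $U,W$, a Koszul-type symmetrisation expresses each block $\langle(\na_X\Ri)U,W\rangle$ (with $U\in E_k$, $W\in E_l$) as a combination of the three cyclic defects $D(\cdot,\cdot)$. The essential computation is therefore to show that $D(X,U,W):=\sum_a\e_a\langle(\na_{e_a}R)(X,U)W,e_a\rangle$ has no component pairing distinct eigenspaces. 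Here the algebraic input of Theorem~\ref{pr1-3-1} enters: expanding $(\na_ZR)(U,W)=\na_Z\big(R(U,W)\big)-R(\na_ZU,W)-R(U,\na_ZW)$ with $U\in E_k$, $W\in E_l$, $k\neq l$, the term $R(U,W)$ vanishes identically, and the remaining two terms collapse to $-R(\pi_l\na_ZU,W)-R(U,\pi_k\na_ZW)$ because $R(E_p,E_l)=0$ for $p\neq l$. Feeding this back through the projection identity above turns $D$ into a linear expression in the very off-diagonal connection components one is trying to bound.

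The main obstacle is exactly this bootstrapping step: the relations obtained are self-referential, and closing them requires assembling them into a finite linear system for the unknown blocks $\pi_k\na_\bullet\pi_l$ and inverting it using the strict inequalities $\al_k\neq\al_l$ (and $\al_i\neq0$ for $i\geq1$) guaranteed by the mutual primality of the $P_i$. I expect the cleanest route is to prove Core~1 and Core~2 simultaneously by contracting the expanded Bianchi defect against the frame and repeatedly substituting the eigenprojection identity, each substitution reducing the number of undetermined cross terms until only multiples of the already-vanishing blocks remain; the invertibility furnished by $\al_k-\al_l\neq0$ then forces these blocks to be zero.

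Two subsidiary points must be handled along the way: the reduction to the open set of locally constant multiplicities (so that the $\pi_i$ are smooth and one may differentiate), and, when some $P_i$ has degree two (non-real eigenvalues in the genuinely pseudo-Riemannian case), replacing the scalar factor $\al_l-\al_k$ by the invertible operator $P_l(\Ri)_{|E_k}$. In the Lorentzian setting Proposition~\ref{pr1} removes this last complication, since all eigenvalues are then real.
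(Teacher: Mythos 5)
Your reduction of the five inclusions to the vanishing of off-diagonal blocks of $\na\Ri$ is sound bookkeeping, but the mechanism you propose for proving that vanishing has a genuine gap at its centre. The tensor $T(X,U,W):=(\na_X\ric)(U,W)$ is symmetric in $(U,W)$, so for fixed arguments there are only three independent values $a=T(X,U,W)$, $b=T(U,X,W)$, $c=T(W,X,U)$, and the once-contracted Bianchi defect $D$ gives you only the differences $a-b$, $b-c$, $c-a$. These determine $a,b,c$ up to a common additive constant: the totally symmetric part of $\na\ric$ is invisible to the contracted second Bianchi identity, so no ``Koszul-type symmetrisation'' can express the individual blocks $\langle(\na_X\Ri)U,W\rangle$ in terms of $D$. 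The subsequent step --- assembling the self-referential relations into a finite linear system in the unknown connection blocks and inverting it --- is exactly where the proof would have to live, and you leave it as an expectation without exhibiting the system or showing it is determined; given the symmetric-part obstruction just described, there is real reason to doubt it closes with only the contracted identity as input.

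The paper's proof avoids all of this by applying the \emph{uncontracted} second Bianchi identity to the curvature before tracing. For $X\in E_i^{\perp}$ and $Y,Z,T\in E_i$ one writes $(\na_XR)(Y,Z)T=-(\na_YR)(Z,X)T-(\na_ZR)(X,Y)T$; expanding the right-hand side, the terms $\na_Y\big(R(Z,X)T\big)$ and $\na_Z\big(R(Y,X)T\big)$ vanish outright because $R_{|E_i\we E_j}=0$, and everything that survives has the form $R(\cdot,\cdot)T$ with $T\in E_i$, hence lies in $E_i$ by $\h(\Ku)$-invariance --- no undetermined cross terms ever appear. Comparing with the direct expansion of $(\na_XR)(Y,Z)T$ (using the algebraic Bianchi identity to handle $R(Y,Z)\na_XT$) gives $\na_X\big(R(Y,Z)T\big)\in E_i$; tracing over a frame adapted to the splitting then yields $\na_X\big(\Ri(Y)\big)\in E_i$; and finally, since $P_i$ has nonzero constant term for $i\geq1$, one writes $Y$ as a polynomial in $\Ri$ applied to $Y$ with no constant term, which converts $\na_X\big(\Ri(Y)\big)\in E_i$ into $\na_XY\in E_i$. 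This last observation is the clean replacement for your eigenprojection identity and removes any need for inversion; the remaining inclusions then follow from $\na g=0$. I would rebuild your argument around the uncontracted Bianchi identity in this way.
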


    \begin{proof}$\;$

Let $m\in M$. The theorem.\ref{pr1-3-1} induces that: \begin{equation}\label{dec-Tm-ri}  T_mM=E_0(m)\oplus E_1(m)\oplus...\oplus E_r(m). \end{equation}

In the first step, we will show that for $i\geq1$ and $X\in E_i^{\perp}$,   $\na_XE_i\subset E_i$;\\
 Let  $i\geq1$  and  $X\in E_i^{\perp}$. First, we show  $\na_X(R(E_i,E_i)E_i)\subset E_i$:

  Let us take  $Y,\; Z,\; T\in E_i$,  by the second Bianchi identity, we get

				\begin{eqnarray*} \na_X R(Y,Z,T)&:=&(\na_X R)(Y,Z)T\\
&=&-\na_Y R(Z,X,T)-\na_Z R(X,Y,T)\\
   &=&-\na_Y( R(Z,X)T)+ R(\na_YZ,X)T+ R(Z,\na_YX)T+ R(Z,X)\na_YT\\&&-\na_Z( R(Y,X)T)+ R(\na_ZY,X)T+ R(Y,\na_ZX)T+ R(Y,X)\na_ZT\\
   &=& R(\na_YZ,X)T+ R(Z,\na_YX)T+ R(\na_ZY,X)T+ R(Y,\na_ZX)T.
	\end{eqnarray*}
	
	By theorem.\ref{pr1-3-1}, we get; $R(V,V)(E_i)\subset E_i$ and  $\na_X R(Y, Z,T)\in
E_i$.

On the other hand,	
	
         \begin{eqnarray*} \na_X R(Y,Z,T)&=&\na_X( R(Y,Z)T)- R(\na_XY,Z)T- R(Y,\na_XZ)T- R(Y,Z)\na_XT\\
				&=&\na_X( R(Y,Z)T)- R(\na_XY,Z)T- R(Y,\na_XZ)T
\\&&+ R(Z,\na_XT)Y+ R(\na_XT,Y)Z. \end{eqnarray*}
				which proves that,  $\na_X(R(Y, Z)T)\in E_i$.

 Now we will show that $\na_X\Ri(Y)\in E_i$.\\
  We choose an orthogonal basis $(e_1,..., e_n)$  associeted to the decomposition(\ref{dec-Tm-ri}) where $\epsilon_k =\langle e_k,e_k\rangle$  such that $\epsilon_k^2=1$.\\
Let $U \in E_i^{\perp}$.
 If  $e_k \in E_i$, we have already seen that $\na_X(K(Y, e_k)e_k)\in E_i$  and if  $e_k \in E_i^{\perp}$,
 we get   $ R(Y, e_k) = 0$. Consequently, we have
           \begin{eqnarray*}
           \langle \na_X(\Ri(Y)),U\rangle&=& -\langle \Ri(Y), \na_XU\rangle\\
            &=&\sum_{k=1}^n\epsilon_k \langle R(Y,e_k)e_k,\na_XU\rangle\\&=&-\sum_{k=1}^n \epsilon_k \langle \na_X( R(Y,e_k)e_k),U\rangle
   \\&=&0. \end{eqnarray*}
   then $\na_X\big(\Ri(Y) \big) \in E_i$.
   
 If $P_i(t)=t^2+at+b$ whit $b\neq0$, then for all $Y\in E_i$, we have $Y=-\frac{1}{b}(\Ri^2(Y)+a\Ri(Y))$.\\ Consequently, $ \na_XY  \in E_i.$
 
 If $P_i(t)=t-\la_i$  whit $\la_i\neq0$, then for all $Y\in E_i$, we have $Y=\frac{1}{\la_i}\Ri(Y)~~.$\\ Consequently $\na_XY  \in E_i.$

So,  $\na_XE_i \subset E_i$, which shows that $\na_{E_j}E_i \subset E_i$ and   $\na_{E_0}E_i \subset E_i$,
 for all  $i,\; j  \geq1$ whit $i\neq  j$.

 The other results are obtained immediately because the metric $ g $ is parallel(i,e. $\nabla g=0$).

    \end{proof}
    \begin{co}{\label{co31}}
    	Let $ (M, g) $ be a connected semi-symmetric pseudo-Riemannian manifold. Let $\X=\prod_i P_i$ be the minimal polynomial of $\Ri$.  If we set $E_0:=\ker(\Ri^2)$  and for all $i\geq 1$, $E_i:=\ker(P_i(\Ri))$. Then, for all $i\geq1$, the distributions $E_0$ et $E_0+E_i$ are involutive. \\We set ${\cal N}_0$ the integral submanifolds of $E_0$.
    \end{co}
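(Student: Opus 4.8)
The plan is to reduce the claim to the covariant-derivative inclusions already established in Proposition~\ref{pr121}, exploiting the fact that the Levi-Civita connection is torsion-free. Recall that a distribution $D$ is involutive precisely when it is closed under the Lie bracket, and that for any vector fields $X,Y$ one has $[X,Y]=\na_XY-\na_YX$. Thus I would argue entirely in terms of the Levi-Civita connection, testing membership of $\na_XY$ and $\na_YX$ in the relevant distribution and subtracting. No integrability-condition machinery beyond this is needed, since the inclusions in \eqref{l} already package all the information.

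For the distribution $E_0$, the argument is immediate. Taking $X,Y\in E_0$, Proposition~\ref{pr121} gives $\na_{E_0}E_0\subset E_0$, so both $\na_XY$ and $\na_YX$ lie in $E_0$, whence $[X,Y]=\na_XY-\na_YX\in E_0$. Therefore $E_0$ is involutive.

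For $E_0+E_i$ with $i\geq1$, I would decompose arbitrary sections $X=X_0+X_i$ and $Y=Y_0+Y_i$ with $X_0,Y_0\in E_0$ and $X_i,Y_i\in E_i$, then expand $[X,Y]$ by bilinearity into the four brackets $[X_0,Y_0]$, $[X_0,Y_i]$, $[X_i,Y_0]$, $[X_i,Y_i]$ and check each lands in $E_0+E_i$. The relation $\na_{E_0}E_0\subset E_0$ handles $[X_0,Y_0]$. For the mixed brackets $[X_0,Y_i]$ and $[X_i,Y_0]$, the two inclusions $\na_{E_0}E_i\subset E_i$ and $\na_{E_i}E_0\subset E_0+E_i$ together force each term of the form $\na_{(\cdot)}(\cdot)$ into $E_0+E_i$. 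Finally, $[X_i,Y_i]$ is controlled by $\na_{E_i}E_i\subset E_0+E_i$. Summing, $[X,Y]\in E_0+E_i$, so $E_0+E_i$ is involutive, and the integral submanifolds ${\cal N}_0$ of $E_0$ exist by the Frobenius theorem.

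I do not anticipate a genuine obstacle here: the statement is a direct corollary of Proposition~\ref{pr121}. The only point requiring care is the bookkeeping of the cross terms $[X_i,Y_0]$, where the asymmetry between $\na_{E_0}E_i\subset E_i$ and $\na_{E_i}E_0\subset E_0+E_i$ means one must retain the full $E_0+E_i$ target rather than hoping for the sharper inclusion into $E_i$ alone; the extra $E_0$-component is precisely what is absorbed by working with the distribution $E_0+E_i$ rather than $E_i$. This also explains structurally why $E_i$ by itself need not be involutive while $E_0+E_i$ always is.
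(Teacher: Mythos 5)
Your argument is correct and is precisely the derivation the paper intends: the corollary is stated as an immediate consequence of Proposition~\ref{pr121}, obtained by combining the inclusions \eqref{l} with the torsion-free identity $[X,Y]=\na_XY-\na_YX$, exactly as you do. Your closing remark about why $E_i$ alone need not be involutive while $E_0+E_i$ is, is also consistent with the paper's subsequent remark that these distributions are involutive but not necessarily parallel.
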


    \begin{rem}
    The distributions $E_0$ and $E_0+E_i$ are involutive but not   necessarly parallele.
   \end{rem} 

On the other way, for a semisymmetrical pseudo-Riemannian manifolds $(M,g)$,  the action of the primitive holonomy Lie algebra $\h(R_m)$  gives the  primitive  decomposition of tangent space $T_mM$ on point $m\in M$ as 
\begin{equation}\label{eqv14}
T_mM=V_0(m)+ V_1(m)+\ldots+ V_s(m)+ V_0'(m)
\end{equation} where
 $V_0(m)=\Big\{ x\in T_mM\;/\; R(u,v)x=0, \;\text{ for all}\; (u,v)\in T_m^2M\Big\} $ and the space $V_0'(m)$ is the dual subspace of $V_0(m)\cap  \{V_1(m)+\ldots+ V_s(m)\}$.

With a similar proof of the proposition\ref{pr121}, we check that the distributions $V_i$ have the following properties;
   \begin{pr}\label{pr32} 
 	
 	For all, $i$, $j\geq 1$ whit $i\neq j$;
 	\begin{equation}\label{l15}
 	\na_{V_j}V_i\subset V_i, \;  \na_{V_i}V_i\subset V_0+V_i, \;
 	\na_{V_0}V_i\subset V_i, \;
 	\na_{V_0}V_0\subset V_0, \;  \na_{V_i}V_0\subset V_0+V_i.
 	\end{equation}
 	
 \end{pr}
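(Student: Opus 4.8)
The plan is to run the argument of Proposition~\ref{pr121} almost verbatim, replacing the Ricci characteristic subspaces $E_i$ by the primitive pieces $V_i$ and using, in place of the two properties of the Ricci decomposition invoked there, the two structural properties of the primitive decomposition: each $V_i$ is $\h(R_m)$-invariant and $R(V_i,V_j)=0$ for $i\neq j$. I would fix $m\in M$, work with the primitive decomposition \eqref{eqv14} of $T_mM$, and prove the core inclusion $\na_X V_i\subset V_i$ for $i\geq 1$ and $X\in V_i^\perp$; the five stated inclusions are then read off from it together with $\na g=0$.

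The first half transfers without change. For $i\geq 1$, $X\in V_i^\perp$ and $Y,Z,T\in V_i$, the second Bianchi identity rewrites $\na_X R(Y,Z,T)$ as a sum of terms $R(\cdot,\cdot)W$ with $W\in\{Y,Z,T\}\subset V_i$, so $\h(R_m)$-invariance gives $\na_X R(Y,Z,T)\in V_i$. Expanding the same quantity by the product rule and disposing of the term $R(Y,Z)\na_X T$ through the first Bianchi identity then yields $\na_X\big(R(Y,Z)T\big)\in V_i$. Exactly as in Proposition~\ref{pr121}, this step only uses invariance and the two Bianchi identities, never the fact that the subspaces are Ricci eigenspaces, so it goes through for the $V_i$.

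The main obstacle is the lifting step, i.e.\ passing from ``$\na_X$ preserves the curvature images $R(Y,Z)T$'' to ``$\na_X$ preserves $V_i$''. In Proposition~\ref{pr121} this was purely algebraic: because $E_i=\ker(P_i(\Ri))$ with $P_i(t)=t^2+at+b$ or $t-\la_i$ and $b,\la_i\neq 0$, every $Y\in E_i$ equals a polynomial in $\Ri$ applied to $Y$, and $\na_X\Ri(Y)\in E_i$ follows from the Bianchi step via $\Ri(Y)=\sum_k\e_k R(Y,e_k)e_k$. For the primitive decomposition the $V_i$ are \emph{not} characteristic subspaces of $\Ri$, so this exact device is unavailable. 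Instead I would use that, for $i\geq 1$, the piece $V_i$ is generated by its own curvature: since $V_i+(V_i\cap V_i^\perp)'$ is indecomposable under $\h(R_m)$ and $\h_i(R_m)=R(V_i,V_i)$ acts on $V_i$ with no trivial summand, the images $R(a,b)c$ with $a,b,c\in V_i$ span $V_i$ up to its degenerate directions. I expect the genuinely delicate point to be precisely these degenerate directions: unlike the orthogonal Ricci decomposition \eqref{dec-V-Ric}, the primitive decomposition \eqref{eqv14} is only a \emph{weak} (non-orthogonal) direct sum, carried through the dual subspaces $V_0'$ and $(V_i\cap V_i^\perp)'$, so the orthonormal-frame computation that closes the Ricci case must be replaced by one adapted to the duality pairing defining those subspaces.

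Once $\na_X V_i\subset V_i$ is secured for $X\in V_i^\perp$, the inclusions $\na_{V_j}V_i\subset V_i$ ($j\neq i$) and $\na_{V_0}V_i\subset V_i$ are immediate, since $V_j$ ($j\neq i$) and $V_0$ lie in $V_i^\perp$. The remaining three, $\na_{V_i}V_i\subset V_0+V_i$, $\na_{V_0}V_0\subset V_0$ and $\na_{V_i}V_0\subset V_0+V_i$, follow as at the end of Proposition~\ref{pr121} from $\na g=0$: for $W$ in a complementary piece one writes $\langle\na_X Y,W\rangle=-\langle Y,\na_X W\rangle$ and kills the right-hand side using the inclusions already proved — the only extra care being, once more, to keep track of the non-orthogonality of the weak decomposition when identifying the complements.
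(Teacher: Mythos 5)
Your overall route is the one the paper itself takes: its entire justification of Proposition~\ref{pr32} is the phrase ``with a similar proof of Proposition~\ref{pr121}'', and you are right that the second-Bianchi computation transfers verbatim, since it only uses the $\h(R_m)$-invariance of the pieces and the vanishing $R(V_i,V_j)=0$ for $i\neq j$. You have also correctly isolated the one step that does not transfer: in Proposition~\ref{pr121} one passes from $\na_X\big(R(Y,Z)T\big)\in E_i$ to $\na_XY\in E_i$ by writing $Y$ as a polynomial without constant term in $\Ri$ applied to $Y$, and the $V_i$ admit no such description.

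The substitute you propose for that step --- that the images $R(a,b)c$ with $a,b,c\in V_i$ span $V_i$ up to its degenerate directions --- is not merely left unproved; it fails in exactly the degenerate situation that matters in the Lorentzian setting. For a weakly irreducible piece of pp-wave type (the case $R(X,Y)=-p\we E$, $R^2=0$, which is precisely case 2(b) of Proposition~\ref{proposition41}), one has $V_1\subset p^{\perp}$ and, by the symmetry $\langle R(a,b)u,v\rangle=\langle R(u,v)a,b\rangle$, every $R(a,b)$ with $a,b\in p^{\perp}$ vanishes; hence $R(V_1,V_1)V_1=\{0\}$, the curvature images generate nothing, and the Bianchi step controls only trivial quantities. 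Closing the degenerate case therefore needs a genuinely different argument (for instance exploiting the recurrence of the isotropic line $V_0\cap V_1$ and recovering $V_1$ as its orthogonal, modulo $V_0$), not just extra bookkeeping with the duality pairing. To be fair, the paper supplies no more detail than ``similar proof,'' so the gap is shared with the source; but as written your lifting step does not close, and the spanning claim you lean on should be restricted to the nondegenerate pieces $V_i$, $i\geq 2$, which are irreducible Riemannian and for which irreducibility does give $R(V_i,V_i)V_i=V_i$.
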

   
	Now we go back to the Ricci decomposition and for all $1\leq i\leq r$,
            we consider the distributions  $F_i$  spanned by the vector fields of the forms
             \begin{equation}
              X_1,\nabla_{X_1}X_2,\nabla_{X_1}\nabla_{X_2}X_{3},...,\nabla_{X_1}...\nabla_{X_l}X_{l+1},...\text{ect}.,
             \end{equation} where the vector fields $(X_i)_{i\geq1}$ are  belong to $E_i$.

             In  the same way,  let us the subspaces; $F$, $\tilde{F}_i$,  and  ${F}_{0}$ checking:
             \begin{eqnarray*}             
             F&=&F_{1}+F_{2}+...+F_{r},\\
             TM&=&F\oplus(F\cap F^\bot)'\oplus {F}_{0},\\ 
             \tilde{F}_{i}&=&=F_i\oplus\left(F_i\cap F_i^\bot\right)',\\ 
             \end{eqnarray*} 
	where $\left(  F\cap F^\bot\right)'$ and 
	$\left(F_i\cap F_i^\bot\right)'$ are respectively, the dual subspaces of $ F\cap F^\bot$ and $F_i\cap F_i^\bot$.
\begin{rem}
\item it's obvious that: $$ \forall i\geq1,\;\:F_{0}\cup \left(  F\cap F^\bot\right)\cup \left(  F\cap F^\bot\right)'\subseteq E_0\;\esp \; E_i\subseteq F_i\subseteq E_0+E_i.$$

\end{rem}
 
 For  $i$, $j\geq1$, we put
  $X_1,...,X_k, (\text{resp.}
  Y_1,...,Y_l...,\text{ect},)$ the vector fields belong to  $E_i$ (resp. $E_j$).\\
   In the first step,  we show the following  lemma:
    \begin{Le}  For  all $\:0\leq i\neq j \neq
    	0$,
     the vector fields of  the  forms  $$
      \nabla_{X_1}\nabla_{Y_1}\nabla_{Y_2}...\nabla_{Y_l}Y_{l+1} $$
     are tangent to $F_{j}$, i.e, we have $$\nabla_{E_i}F_{j}\subseteq
     F_{j}.$$
 As a consequence, for all $j \geq 1 $, the distribution $ F_{j}$  is  parallel and the  integral submanifolds are totally geodesic.
    \end{Le}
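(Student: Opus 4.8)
The plan is to prove the single statement $\na_X W \in F_j$ for every $X \in E_i$ (with $0\leq i$, $i\neq j$, $j\geq 1$) and every generator $W=\na_{Y_1}\na_{Y_2}\cdots\na_{Y_l}Y_{l+1}$ of $F_j$, all $Y_k\in E_j$, by induction on the length $l$. I would first record that $F_j$ is, by its very construction, stable under $\na_{E_j}$: prepending one more $\na_{Y_0}$ with $Y_0\in E_j$ to a generator simply yields a longer generator, and the Leibniz rule extends this to arbitrary $C^\infty(M)$-combinations. Hence only the \emph{transverse} derivatives $\na_{E_i}F_j$ ($i\neq j$) need to be controlled; combined with $E_j$-stability and the orthogonal splitting $T_mM=\bigoplus_k E_k$ of Theorem \ref{pr1-3-1}, this will give $\na_X W\in F_j$ for every $X$.

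For the base case $l=0$ a generator is just $Y\in E_j$, and $\na_X Y\in E_j\subset F_j$ follows directly from Proposition \ref{pr121} (namely $\na_{E_i}E_j\subset E_j$ for $i\geq1$, and $\na_{E_0}E_j\subset E_j$ when $i=0$). For the inductive step I write $W=\na_{Y_1}W'$ with $W'$ a shorter generator and commute the derivatives with the curvature identity
\[ \na_X\na_{Y_1}W' = \na_{Y_1}\na_X W' + \na_{[X,Y_1]}W' + R(X,Y_1)W'. \]
The first term lies in $F_j$ because $\na_X W'\in F_j$ by the induction hypothesis and $F_j$ is $\na_{E_j}$-stable. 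For the middle term I expand $[X,Y_1]=\na_X Y_1-\na_{Y_1}X$ and decompose it through Proposition \ref{pr121}: one checks $[X,Y_1]\in E_j\oplus E_i$ (or $E_j\oplus E_0$ when $i=0$), so its $E_j$-component contributes via $\na_{E_j}$-stability and its $E_i$- (or $E_0$-) component contributes via the induction hypothesis applied to the shorter $W'$.

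The decisive point is the third term: since $X\in E_i$ and $Y_1\in E_j$ with $i\neq j$, property (4) of Theorem \ref{pr1-3-1} gives $\Ku_{|E_i\we E_j}=0$, whence $R(X,Y_1)=0$ and the curvature term vanishes outright. This is where the semi-symmetry hypothesis does the real work, and it is the step I expect to be the crux: without the orthogonal vanishing $R(E_i,E_j)=0$ the commutator would leak a curvature term outside $F_j$ and the induction would break. With all three terms accounted for, the induction closes and $\na_{E_i}F_j\subseteq F_j$ is established for all $i\neq j$.

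Finally, combining the transverse statement $\na_{E_i}F_j\subseteq F_j$ ($i\neq j$) with the built-in stability $\na_{E_j}F_j\subseteq F_j$ and decomposing an arbitrary field along $TM=\bigoplus_k E_k$, I obtain $\na_X W\in F_j$ for all $X\in\X(M)$ and $W\in F_j$; that is, $F_j$ is a parallel distribution. A parallel distribution is in particular autoparallel, so from $[U,V]=\na_U V-\na_V U$ it is involutive, and its integral submanifolds are totally geodesic, which is the asserted consequence.
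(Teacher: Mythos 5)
Your proposal is correct and follows essentially the same route as the paper: induction on the length of the generator, the commutation identity $\na_X\na_{Y_1}W'=\na_{Y_1}\na_XW'+\na_{[X,Y_1]}W'+R(X,Y_1)W'$ with the curvature term killed by $R(E_i,E_j)=0$ from Theorem \ref{pr1-3-1}, and the bracket decomposed via Proposition \ref{pr121} into an $E_j$-part (absorbed by $\na_{E_j}$-stability of $F_j$) and an $E_i$- or $E_0$-part (absorbed by the induction hypothesis). Your treatment is if anything slightly more explicit than the paper's about why $R(X,Y_1)=0$ and about assembling the final parallelism from the full splitting $TM=\bigoplus_k E_k$.
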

\begin{proof}
    We can prove this lemma by induction:\\Let $0\leq i\neq j\geq 1$. So $R(X_1,Y_1)Y_2=0$, then $$
      \nabla_{X_1}\nabla_{Y_1}{Y_2}=\nabla_{Y_1}\nabla_{X_1}{Y_2}+\nabla_{[X_1,Y_1]}{Y_2}=\nabla_{Y_1}{Y_2}^*+\nabla_{Y_1^*}{Y_2}-\nabla_{X_1^*}{Y_2}, $$
      where the vector fields ${Y_2}^*:=\nabla_{X_1}{Y_2}$ and
      ${Y_1}^*:=\nabla_{X_1}{Y_1}$  are  tangent to $E_j$ and
      the vector fields ${X_1}^*:=\nabla_{Y_1}{X_1}$ is tangent  to
      $E_i$ for $i\geq 1$ and for $i=0$, we get ${X_1}^*\in E_0+E_j$ and $\nabla_{X_1^*}{Y_2}$ belong to $F_j$. This assumes that the three terms above are tangent to $F_{j}$. So 
         $\nabla_{X_1}\nabla_{Y_1}{Y_2} \in F_{j}$ is checked.\\
        Now, in the genral  case:

        So  $R(X_1,Y_1)=0$, we obtain;
 \begin{eqnarray*} \nabla_{X_1}\nabla_{Y_1}...\nabla_{Y_k}Y_{k+1}&=&\nabla_{Y_1}\nabla_{X_1}\nabla_{Y_2}....\nabla_{Y_k}Y_{k+1}+\nabla_{[X_1,Y_1]}\nabla_{Y_2}...\nabla_{Y_k}Y_{k+1}\\&=&\nabla_{Y_1}\nabla_{X_1}\nabla_{Y_2}....\nabla_{Y_k}Y_{k+1}+\nabla_{{Y_1}^*}\nabla_{Y_2}...\nabla_{Y_k}Y_{k+1}-\nabla_{{X_1}^*}\nabla_{Y_2}...\nabla_{Y_k}Y_{k+1}. \end{eqnarray*}
 As the vector field ${Y_1}^*:=\nabla_{X_1}{Y_1}$ is tangent to $E_j$, and the vector field  ${X_1}^*:=\nabla_{Y_1}{X_1}$ is tangent to
      $E_i$, by the induction hypothesis, we get that $
      \nabla_{X_1}\nabla_{Y_1}...\nabla_{Y_k}Y_{k+1}$ is tangent to
      $F_{j}$. \end{proof}

\begin{Le}\label{lemme201}
	The subspaces $F_{1}$, $F_2$,...and  $F_{r}$  are pairwise orthogonal.
\end{Le}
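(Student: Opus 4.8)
The plan is to prove that an arbitrary spanning vector field of $F_i$ is orthogonal to the whole distribution $F_j$, and then to conclude by bilinearity, since $F_i$ is by definition spanned by such fields. I would collect first the ingredients already at hand: the Ricci decomposition is orthogonal, so $\langle E_i,E_j\rangle=0$ and $\langle E_i,E_0\rangle=0$ whenever $i,j\geq 1$ with $i\neq j$; the inclusion $F_j\subseteq E_0+E_j$ from the remark above; the preceding lemma, which gives $\na_{E_i}F_j\subseteq F_j$ for $0\leq i\neq j$ (indeed $F_j$ is parallel); and the metric compatibility $X\langle u,v\rangle=\langle\na_X u,v\rangle+\langle u,\na_X v\rangle$ coming from $\na g=0$.

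I would argue by induction on the \emph{level} $k$ of a spanning field of $F_i$, where a level $0$ field is an $X_1\in E_i$ and a level $k$ field has the form $\na_{X_1}u'$ with $X_1\in E_i$ and $u'$ a spanning field of level $k-1$. The inductive statement must be the strong one: \emph{every spanning field of $F_i$ of level $k$ is orthogonal to every vector field tangent to $F_j$}. For the base case $k=0$, a field $X_1\in E_i$ is orthogonal to $F_j\subseteq E_0+E_j$ because $E_i\perp E_0$ and $E_i\perp E_j$.

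For the inductive step I would write a level $k$ field as $u=\na_{X_1}u'$ with $X_1\in E_i$ and $u'$ of level $k-1$, and let $v$ be tangent to $F_j$. Metric compatibility gives $\langle u,v\rangle=X_1\langle u',v\rangle-\langle u',\na_{X_1}v\rangle$. The first term vanishes because the function $\langle u',v\rangle$ is identically zero by the induction hypothesis, so its derivative along $X_1$ is zero. For the second term, the preceding lemma yields $\na_{X_1}v\in F_j$ (here $X_1\in E_i$ with $i\neq j$), whence $\langle u',\na_{X_1}v\rangle=0$ again by the induction hypothesis applied to the $F_j$-field $\na_{X_1}v$. Thus $\langle u,v\rangle=0$, the induction closes, and therefore $F_i\perp F_j$.

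The point on which the argument turns — and the only place where care is needed — is the formulation of the induction hypothesis. One must require orthogonality of a level $k-1$ field of $F_i$ to \emph{all} of $F_j$, not merely to spanning fields of some matching level, precisely because $\na_{X_1}v$ is in general only a combination of spanning fields of $F_j$ of uncontrolled levels. This is exactly what the preceding lemma secures by keeping $\na_{E_i}v$ inside $F_j$, and it is what allows both terms of the Leibniz expansion to be dispatched by the same, strictly lower-level, hypothesis.
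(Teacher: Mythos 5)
Your proof is correct and follows essentially the same route as the paper: an induction on the order of the iterated covariant derivative spanning $F_i$, closed by metric compatibility ($\na g=0$) together with the preceding lemma's inclusion $\na_{E_i}F_j\subseteq F_j$. The only (harmless) difference is the base case $E_i\perp F_j$, which you obtain directly from the containment $F_j\subseteq E_0+E_j$ and the orthogonality of the Ricci decomposition, whereas the paper establishes it by a separate preliminary induction on the $F_j$-side.
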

      \begin{proof} By induction, we show that the vector field
      $\nabla_{Y_1}\nabla_{Y_2}...\nabla_{Y_k}Y_{k+1},\;\;
      k\geq0$ is  orthogonal to $E_i$.\\      
             For $k=0$  and $k=1$, it's obvious.

      Now, suppos that   result  is  true for any vector fields of the 
   form
      $\nabla_{Y_2}...\nabla_{Y_k}Y_{k+1}$. Let 
       $X$ be a vector fields in $E_i$. Then, the vector fields 
      $\nabla_{Y_1}{X}$ is  also tangent to $E_i$. By induction, we get  
      $$g(X,\nabla_{Y_1}\nabla_{Y_2}...\nabla_{Y_k}Y_{k+1})=-g(\nabla_{Y_1}{X},\nabla_{Y_2}...\nabla_{Y_k}Y_{k+1})=0.$$ This  proves the statement.

    Using induction again, we can  prove that the vector fields on the form
      $\nabla_{X_1}\nabla_{X_2}...\nabla_{X_k}X_{k+1}$ are orthogonal to those which are of the form $\nabla_{Y_1}\nabla_{Y_2}...\nabla_{Y_l}Y_{l+1}$.

  For the case $k=0$, the proof is given above. Now, for the vectors fields on the form
  $\nabla_{X_1}\nabla_{X_2}...\nabla_{X_k}X_{k+1}$ are orthogonal
 to  the vector fields on the form
  $\nabla_{Y_1}\nabla_{Y_2}...\nabla_{Y_l}Y_{l+1}$, so the induction
  hypothesis and  lemma(\ref{lemme201}), we
get
  \begin{eqnarray*}g(\nabla_{X_1}\nabla_{X_2}...\nabla_{X_k}X_{k+1},\nabla_{Y_1}\nabla_{Y_2}...\nabla_{Y_l}Y_{l+1})&=&-
  g(\nabla_{X_2}...\nabla_{X_k}X_{k+1},\nabla_{X_1}\nabla_{Y_1}\nabla_{Y_2}...\nabla_{Y_l}Y_{l+1})\\
  &=&0,\end{eqnarray*}
   whil  the vector fields on the form
   $\nabla_{X_1}\nabla_{Y_1}\nabla_{Y_2}...\nabla_{Y_l}Y_{l+1}$
   are tangent to $F_j$. This gives completely the proof of lemma.
\end{proof}

      \begin{co}\label{lemma33}
       $(F\cap F^\bot)'\oplus {F}_{0}$ and $ 
      \tilde{F}_{j}=F_j\oplus\left(F_j^\bot\cap F_j\right)'$ are involutive distributions, for all $j\geq 1$,
      \end{co}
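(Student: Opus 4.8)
The plan is to exploit that each $F_j$ is already parallel (the preceding lemma), so that only the non-canonical ``dual'' summands $(F_j\cap F_j^\bot)'$ and $(F\cap F^\bot)'$ require work. First I would record the consequences of parallelism: since $\na F_j\subseteq F_j$ and $\na g=0$, the orthogonal distribution $F_j^\bot$ is parallel as well, hence so are the radical $G_j:=F_j\cap F_j^\bot$ and $G_j^\bot$; in particular all of these are involutive. The same holds for $F=F_1+\ldots+F_r$ and for $F\cap F^\bot$. Using the pairwise orthogonality of $F_1,\ldots,F_r$ (Lemma~\ref{lemme201}) I would choose the dual complements additively, $(F\cap F^\bot)'=\bigoplus_j (F_j\cap F_j^\bot)'$, so that $TM=\bigoplus_j\tilde F_j\oplus\big((F\cap F^\bot)'\oplus F_0\big)$ with each $\tilde F_j$ nondegenerate, and I would also recall that $\tilde F_j\subseteq E_0+E_j$ while $(F\cap F^\bot)'\oplus F_0\subseteq E_0$ (the Remark), together with the involutivity of $E_0$ and of $E_0+E_j$ from Corollary~\ref{co31}.

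For $\tilde F_j$, since it is nondegenerate it suffices to show $g([X,Y],W)=0$ for all sections $X,Y$ of $\tilde F_j$ and $W$ of $\tilde F_j^\bot$. Writing $X=X_F+X_{G'}$, $Y=Y_F+Y_{G'}$ with $X_F,Y_F$ in $F_j$ and $X_{G'},Y_{G'}$ in $(F_j\cap F_j^\bot)'$, the parallelism $\na_\bullet F_j\subseteq F_j\subseteq\tilde F_j$ kills every term carrying an $F_j$-component, because $\na_X Y_F,\na_Y X_F$ stay in $F_j$ and $W\perp F_j$. Using $[X,Y]=\na_XY-\na_YX$ this reduces the claim to the single identity $g(Y_{G'},\na_X W)=g(X_{G'},\na_Y W)$. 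The $\na$-rules of Proposition~\ref{pr121} confine $\na_X W$ and $\na_Y W$ to $E_0+E_j$ (since $X,Y\in E_0+E_j$ and $W\in E_0$), so only their $E_0$-parts are tested against $X_{G'},Y_{G'}\in E_0$; the point is then to see that these $E_0$-parts lie in $\tilde F_j^\bot$, which follows from the parallelism of $F_j^\bot$ and $G_j$ and forces the identity. Involutivity of $\tilde F_j$ follows.

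For $(F\cap F^\bot)'\oplus F_0$ I would argue identically, with $F$ in place of $F_j$: this distribution lies in the involutive $E_0$ (Corollary~\ref{co31}), so any bracket of its sections already lands in $E_0$, and the same reduction, using parallelism of $F$, $F^\bot$ and $F\cap F^\bot$ together with Lemma~\ref{lemme201}, shows the bracket cannot acquire a component along the complementary directions $\bigoplus_j(F_j\cap E_0)$, hence stays in $(F\cap F^\bot)'\oplus F_0$.

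The hard part will be the last step of each reduction: the dual complements $(F_j\cap F_j^\bot)'$ and $(F\cap F^\bot)'$ are not canonical and need not be parallel, so one cannot simply invoke $\na(\text{dual})\subseteq\tilde F_j$. The technical heart is to control the $E_0$-component of the covariant derivatives of these dual sections, equivalently to prove the symmetric identity $g(Y_{G'},\na_X W)=g(X_{G'},\na_Y W)$, by combining the parallelism of $F_j,F_j^\bot,G_j$ with the orthogonality of the $F_i$ and the $\na$-rules of Proposition~\ref{pr121}, so that the would-be transverse components of the bracket are forced to vanish.
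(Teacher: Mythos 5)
Your reduction is set up correctly: since $F_j$ is parallel, so are $F_j^\perp$ and $G_j:=F_j\cap F_j^\perp$; the completion $\tilde F_j$ is nondegenerate, so involutivity amounts to $g([X,Y],W)=0$ for all $W\in\Gamma(\tilde F_j^\perp)$; and after the $F_j$-components are killed by parallelism one is left with $g(Y_{G'},\na_XW)=g(X_{G'},\na_YW)$. The gap is exactly at the step you yourself flag as the ``technical heart'': you assert that the $G_j$-component of $\na_XW$ in the splitting $F_j^\perp=G_j\oplus\tilde F_j^\perp$ vanishes ``by the parallelism of $F_j^\perp$ and $G_j$'', but parallelism of $G_j$ only controls $\na$ of sections \emph{of} $G_j$, not the $G_j$-component of $\na_XW$. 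To detect that component one must pair against $G_j'$, i.e.\ compute $g(\na_XW,G_j')=-g(W,\na_XG_j')$, and $\na_XG_j'$ is precisely the unknown quantity: your claim is equivalent to $\na_{\tilde F_j}G_j'\subseteq\tilde F_j$, which is (a strengthening of) the statement to be proved. Since $G_j'$ is a non-canonical complement, no parallelism is available for it, so the proposal is a reduction to an unproved assertion --- and one that is not obviously true for an arbitrary choice of $(F_j\cap F_j^\perp)'$ --- rather than a proof.

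For comparison, the paper's own argument never introduces $\tilde F_j^\perp$ or the pointwise decomposition $X=X_F+X_{G'}$: it works with the $E_i$-blocks, using the preceding lemma ($\na_{E_i}F_k\subseteq F_k$ for all $i\geq0$, $k\geq1$) together with the orthogonality $g(F_0,F_k)=0$ to obtain $g(\na_{E_i}F_0,F_k)=-g(F_0,\na_{E_i}F_k)=0$, hence $\na F_0\subseteq F^\perp$, and deduces the involutivity statements from this and from the parallelism of the $F_j$. To complete your route you would need an independent handle on $\na$ of the dual sections (or a specific, covariantly controlled construction of the dual complements); as written, the central implication is circular.
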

    \begin{proof} We have already seen the relations 
  $\nabla_{E_i}{F_{j}}\subseteq F_{j}$ only for the cases $i\geq 0$ and $j\geq1$. It  remains to show  the  relationships;
   $\nabla_{E_j}{F_{0}}\subseteq F_{0}$  for
  $j> 0$.  The  first is obvious, since
  $$
  g(\nabla_{E_i}{F_{0}},F_k)=-g(F_{0},\nabla_{E_j}F_k)=-g(F_{0},F_k)=0,
  \text{for all} \; k>0.$$

  Finally, for the formula $\nabla_{E_0}{F_{0}}$, we get $$
  g(\nabla_{E_0}{F_{0}},F_{i})=-g(F_{0},\nabla_{E_0}F_{i})=g(F_{0},F_{i})=0.$$
  \end{proof}

\begin{rem}
The distribution  $F_{j}\oplus\left(   F_{j}\cap ( F_{j})^\bot\right)'$ is involutive  non degenerated. 
\end{rem}
\begin{pr}\label{lemme43}
	Let $(M,g)$ be a semi-symmetric, locally connected pseudo-Riemannian manifold. Then ,$ M $ is a foliated manifold and the minimal polynomial of the restriction of the Ricci operator on each leaf has one of the following forms $ X $, $ X ^ 2 $, $P$, $ XP $ or $ X ^ 2P $ where $ P $ is an irreducible  polynomial and it is prime with the polynomial $ X $.
	
In the case where the tangent bundle of $M$ admits only one involutive subbundle nondegenerate, $M$ is called  a simple leaf.
\end{pr}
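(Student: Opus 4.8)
The plan is to obtain the foliation from the involutivity already established, and then to read off the minimal polynomial of the restricted Ricci operator directly from the Ricci decomposition \eqref{dec-Tm-ri}.

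First I would produce the foliation. By Corollary \ref{co31} the distributions $E_0$ and $E_0+E_i$ (for each $i\geq1$) are involutive, while Corollary \ref{lemma33} supplies the associated nondegenerate involutive distributions $\tilde F_j=F_j\oplus(F_j\cap F_j^\bot)'$. The Frobenius theorem integrates each of these, so $M$ is a foliated manifold. I then fix a leaf $\Li$ and let $D$ be its tangent distribution: by Corollary \ref{co31}, $D$ equals $E_0$, or $E_0+E_i$ for a single index $i\geq1$, or (when $E_0=\{0\}$) simply $E_i$. In every case $D$ is the sum of the trivial block $E_0$ with at most one Ricci block $E_i$.

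Next I would check that $\Ri$ restricts to $\Li$ as a block-diagonal operator. Each $E_i$ is $\h(\Ku)$-invariant, and by item $(4)$ of Theorem \ref{pr1-3-1} one has $\Ku_{|E_i\we E_j}=0$ whenever $i\neq j$ (the case $j=0$ included); consequently the ambient Ricci operator sends $E_0$ into $E_0$ and each $E_i$ into $E_i$. Thus $\Ri$ preserves $D$ and restricts there to the orthogonal direct sum $\Ri_{|E_0}\oplus\Ri_{|E_i}$, with the evident simplification when one summand is absent. Since the leaves are totally geodesic (the $F_j$ being parallel), this restriction coincides with the intrinsic Ricci operator of $\Li$ for the induced metric.

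Finally I would compute the minimal polynomial. On $E_0=\ker(\Ri^2)$ one has $\Ri_{|E_0}^2=0$, so the minimal polynomial of $\Ri_{|E_0}$ is $X$ if $\Ri_{|E_0}=0$ and $X^2$ otherwise; on each $E_i$ with $i\geq1$ it is the irreducible factor $P_i=:P$, which is coprime with $X$ because the whole $X$-part of $\Ri$ is absorbed into $E_0$. As the minimal polynomial of a block-diagonal operator is the least common multiple of those of its blocks, and $X$ (or $X^2$) is coprime with $P$, the minimal polynomial of $\Ri_{|\Li}$ is exactly one of
\[ X,\quad X^2,\quad P,\quad XP,\quad X^2P, \]
according to whether $D=E_0$ (the first two, depending on whether $\Ri_{|E_0}$ vanishes), $D=E_i$ with $E_0=\{0\}$ (the case $P$), or $D=E_0+E_i$ with $E_0\neq\{0\}$ (the cases $XP$ and $X^2P$). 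This is precisely the announced list. The one delicate point is verifying that the restriction genuinely splits as $\Ri_{|E_0}\oplus\Ri_{|E_i}$ with no surviving off-block term; this is exactly what the vanishing $\Ku_{|E_i\we E_j}=0$ secures, and the remainder is the routine linear algebra of minimal polynomials of block-diagonal maps.
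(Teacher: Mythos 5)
The paper states this proposition without any proof at all, so there is no in-text argument to compare yours against; judged on its own, your strategy --- integrate the involutive distributions already produced in Section 3 via Frobenius, then read the minimal polynomial off a block decomposition of $\Ri$ --- is the natural completion of the paper's machinery and arrives at the correct list $X,\;X^2,\;P,\;XP,\;X^2P$.

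There is, however, one concrete imprecision you should repair: the identification of the leaf tangent distribution. Corollary \ref{co31} gives that $E_0$ and each $E_0+E_i$ are involutive, but these distributions all contain $E_0$ and are pairwise non-transverse, so integrating each of them separately does not produce a single foliation of $M$ whose leaves are tangent to ``$E_0$, or $E_0+E_i$, or $E_i$''. The foliation the paper actually constructs is by the pairwise orthogonal (Lemma \ref{lemme201}), parallel, nondegenerate distributions $\tilde F_j=F_j\oplus(F_j\cap F_j^\perp)'$ together with the residual piece $(F\cap F^\perp)'\oplus F_0\subseteq E_0$, and the leaf tangent bundle is only pinched between $E_j$ and $E_0+E_j$ (the remark after Corollary \ref{lemma33} records $E_i\subseteq F_i\subseteq E_0+E_i$), not equal to either end in general. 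Your computation survives this correction: for any $\Ri$-invariant $D$ with $E_j\subseteq D\subseteq E_0\oplus E_j$ the minimal polynomial of $\Ri_{|D}$ is a multiple of $P_j$ dividing $X^2P_j$, hence one of $P_j$, $XP_j$, $X^2P_j$, while on a leaf contained in $E_0$ it divides $X^2$. But you must then either show that $\tilde F_j$ is $\Ri$-invariant (only $E_0$ and the $E_i$ are known to be), or interpret ``restriction'' as the intrinsic Ricci of the totally geodesic leaf. Your closing identification of the restricted ambient Ricci with the intrinsic Ricci also deserves a word of caution: it requires $\langle R(u,e_k)v,e_k\rangle=0$ for every $e_k$ normal to the leaf, which item 4 of Theorem \ref{pr1-3-1} supplies when $e_k$ lies in some $E_l$ with $l\neq 0,j$, but the normal bundle can also meet $E_0$, where $R_{|E_0\we E_0}$ need not vanish; that term only drops out after a further argument, or one avoids the intrinsic Ricci altogether since the statement concerns the restriction of the ambient operator.
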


\begin{Le}\label{lemme43}Let $ (M, g) $ be a simple leaf semisymmetric, locally connected,  pseudo-Riemannian manifold. Then  its a   tangent bundle  has one of the following forms: \begin{enumerate}\item
		$TM= {F_1}\oplus  \left( F_1\cap  \left( F_1\right)^{\bot} \right)'= E_0\overset{\bot}{\oplus} E_1$.
		\item $TM=E_0$.
	\end{enumerate}
\end{Le}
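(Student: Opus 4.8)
The plan is to derive the dichotomy directly from the two orthogonal splittings already at our disposal: the Ricci decomposition $T_mM=E_0\oplus E_1\oplus\ldots\oplus E_r$ furnished pointwise by Theorem \ref{pr1-3-1}, and the nondegenerate involutive distribution $\tilde{F}_1=F_1\oplus(F_1\cap F_1^\bot)'$ produced in Corollary \ref{lemma33}. The whole point is that the simple-leaf hypothesis, namely that $TM$ is the only nondegenerate involutive subbundle, collapses the number $r$ of non-kernel Ricci blocks to at most $1$ and kills the extra piece $F_0$; the two alternatives of the statement then correspond exactly to $r=1$ and $r=0$.

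First I would pin down the position of $\tilde{F}_1$ inside the Ricci splitting. From $E_1\subseteq F_1\subseteq E_0+E_1$ (the remark preceding the lemma) and from $(F_1\cap F_1^\bot)'\subseteq E_0$, which holds because the radical of $F_1$ can only come from its $E_0$-part as $E_1$ is nondegenerate, one obtains the containment $\tilde{F}_1\subseteq E_0+E_1$. Since $\dim E_k\geq2$ for every $k\geq1$ by Theorem \ref{pr1-3-1}, this containment already shows that $\tilde{F}_1$ is a \emph{proper} subbundle of $TM$ whenever $r\geq2$, because then $\tilde{F}_1\cap E_2=\{0\}$ while $E_2\neq\{0\}$.

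Next I would feed this into the definition of a simple leaf. As $\tilde{F}_1=F_1\oplus(F_1\cap F_1^\bot)'$ is nondegenerate by construction and involutive by Corollary \ref{lemma33}, a proper such subbundle is forbidden; hence $r\geq2$ is impossible and $r\leq1$. If $r=0$ then $T_mM=E_0=\ker(\Ri^2)$ (indeed $\Ri^2=0$), which is the second alternative $TM=E_0$. If $r=1$, I would use the global splitting $TM=F_1\oplus(F_1\cap F_1^\bot)'\oplus F_0=\tilde{F}_1\oplus F_0$: were $F_0\neq\{0\}$, the subbundle $\tilde{F}_1$ would again be proper, nondegenerate and involutive, contradicting the hypothesis. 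Thus $F_0=\{0\}$ and $\tilde{F}_1=TM$, and comparing with the Ricci decomposition $T_mM=E_0\oplus E_1$ yields the first alternative $TM=F_1\oplus(F_1\cap F_1^\bot)'=E_0\overset{\bot}{\oplus}E_1$.

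The step I expect to be delicate is purely the bookkeeping behind properness: one must be sure that adjoining the dual $(F_1\cap F_1^\bot)'$, which sits inside $E_0$, neither reabsorbs the complementary piece $F_0$ nor reaches into the other blocks $E_k$. This is exactly where the inclusions $F_1\subseteq E_0+E_1$ and $(F_1\cap F_1^\bot)'\subseteq E_0$, the orthogonality of the $F_j$ in Lemma \ref{lemme201}, and the directness of the splitting $TM=\tilde{F}_1\oplus F_0$ must be combined; once $\tilde{F}_1\subseteq E_0+E_1$ and $\tilde{F}_1\cap F_0=\{0\}$ are secured, the simple-leaf hypothesis disposes of every case at once.
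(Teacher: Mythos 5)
The paper states this lemma without any proof, so there is nothing to compare against line by line; your argument is, as far as I can tell, exactly the reconstruction the author intends, built from the pieces already in place: the Ricci decomposition of Theorem \ref{pr1-3-1}, the inclusions $E_i\subseteq F_i\subseteq E_0+E_i$, the involutivity and nondegeneracy of $\tilde{F}_1=F_1\oplus(F_1\cap F_1^\bot)'$ from Corollary \ref{lemma33}, and the simple-leaf hypothesis read as ``every nonzero nondegenerate involutive subbundle equals $TM$.'' The dichotomy $r\leq 1$ (hence $TM=E_0$ when $r=0$, and $F_0=\{0\}$, $TM=\tilde{F}_1=E_0\overset{\bot}{\oplus}E_1$ when $r=1$) follows as you say. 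The one point you rightly single out as delicate --- that $F_1\cap F_1^\bot\subseteq E_0$ because $E_1$ is a nondegenerate orthogonal summand, so the dual $(F_1\cap F_1^\bot)'$ can and must be taken inside $E_0$, whence $\tilde{F}_1\subseteq E_0+E_1$ misses $E_2$ entirely --- is handled adequately; it is also the only place where the paper's own Remark (which records $(F\cap F^\bot)'\subseteq E_0$ for the total $F$ but not for the individual $F_i$) needs to be sharpened. I see no gap; your write-up in fact supplies a proof the paper omits.
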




\section{The simple leaf semi-symmetrical Lorentzian manifold}

In this section we will give some properties of semi-symmetric Lorentzian spaces 
\subsection{Ricci decomposition}
 
 In the Lorentzian case, the comparison of primitive and Ricci decompositions produces the following  Proposition; 

\begin{pr}
	\label{proposition41}Let $ (M, g) $ be a simple leaf semisymmetric, locally connected,  Lorentzian manifolds. Then, we get one of the following situations: \begin{enumerate}
			\item $E_0=V_0$, i.e, ${\cal N}_0$ is a plat submanifolds.
		\item The tangent bundle over $M$  is on the forme   $E_0=  V_0+V_1+ (V_0\cap V_1)',$ where $(V_0\cap V_1)'$ is the dual subspace of $(V_0\cap V_1)$.		
		In this situation, we get one of the two following cases: \begin{enumerate}
			\item $E_0=  V_0\oplus V_1$,
			\item  There exists an istropic vector fields $p$ with its dual vector fields $q$ such that
		$E_0=  V_0+V_1+span\{q\}$. In this case, the tensor curvature  holds $$R^2=0.$$
	\end{enumerate}	
	\end{enumerate}
\end{pr}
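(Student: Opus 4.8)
The plan is to compare, over the simple leaf, the Ricci decomposition $TM=E_0\oplus E_1\oplus\ldots\oplus E_r$ of Theorem~\ref{pr1-3-1} with the primitive decomposition $TM=(V_0+V_1+V_0')\oplus V_2\oplus\ldots\oplus V_s$ of Lemma~\ref{lema24}, and to read off which primitive blocks occupy $E_0$. By Lemma~\ref{lemme43} the simple-leaf hypothesis collapses the Ricci decomposition to $TM=E_0\overset{\bot}{\oplus}E_1$ or $TM=E_0$, so the whole question reduces to the internal structure of $E_0$. First I would record two localization facts. If $x\in V_0$ then $R(u,v)x=0$ for all $u,v$, and the curvature symmetry~\eqref{eq1-5} turns $\mathfrak{ric}(x,y)=\sum_k\epsilon_k\langle R(x,e_k)y,e_k\rangle=\sum_k\epsilon_k\langle R(y,e_k)x,e_k\rangle$ into $0$; hence $V_0\subseteq\ker(\Ri)\subseteq E_0$. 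On the other hand each block $V_i$ with $i\geq2$ is Einstein Riemannian (Lemma~\ref{lema24}), so its Ricci is a nonzero multiple of the identity, and since a Lorentzian simple leaf carries at most one nonzero Ricci eigenvalue these blocks all lie in $E_1$. Consequently the only primitive block that can meet $E_0$ beyond $V_0$ is the single non-Riemannian block $V_0+V_1+V_0'$.

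I would then organize the argument around the dichotomy of Proposition~\ref{pr1}: either $\Ri$ is diagonalizable with $E_0=\ker(\Ri)$, or $\Ri$ is of isotropic type with $\ker(\Ri)\subsetneq E_0=\ker(\Ri^2)$. If the restriction of $R$ to $E_0$ vanishes, then $E_0$ carries no curvature, the curvature-bearing block $V_1$ falls into $E_1$, and there remains $E_0=V_0$; this is situation~1, and ${\cal N}_0$ is flat. If instead $R_{|E_0}\neq0$, then $V_1$ meets $E_0$, and, being the unique non-Riemannian block, the whole of $V_0+V_1+V_0'$ must be housed in $E_0$ (the Riemannian blocks cannot absorb the timelike direction); thus $E_0=V_0+V_1+V_0'$ with $V_0'=(V_0\cap V_1)'$, which is situation~2.

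Inside situation~2 the alternative between (a) and (b) is decided by the intersection $V_0\cap V_1$, in agreement with the diagonalizable/isotropic dichotomy of Proposition~\ref{pr1}. When $V_0\cap V_1=\{0\}$ the dual piece $V_0'$ is trivial and the sum $E_0=V_0\oplus V_1$ is orthogonal and nondegenerate: this is case~(a). When $V_0\cap V_1\neq\{0\}$, the isotropic branch together with Lemma~\ref{lema24}(2) forces $\dim V_0'=1$, so $V_0\cap V_1=\mathrm{span}\{p\}$ is a null line and $V_0'=\mathrm{span}\{q\}$ is its dual null line, yielding $E_0=V_0+V_1+\mathrm{span}\{q\}$: this is case~(b).

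The main obstacle is the closing claim $R^2=0$ in case~(b). The null line $\mathrm{span}\{p\}=V_0\cap V_1$ is $\h(\Ku)$-invariant, since both $V_0$ and $V_1$ are primitive blocks and hence invariant; and as $p\in V_0$ every curvature operator annihilates it, $R(u,v)p=0$. Combining the skew-symmetry of each $R(u,v)$ with the invariance of $E_0$ and of the filtration $\mathrm{span}\{p\}\subset p^{\bot}\subset E_0$, and feeding this into the semi-symmetry relation~\eqref{eq1-9} together with the $\Ri$-commutation~\eqref{eq1-10}, one shows that every $R(a,b)$ factors through the null line $\mathrm{span}\{p\}$; since that line is killed by all curvature operators, the composition $R(u,v)\circ R(a,b)$ vanishes identically, i.e. $R^2=0$. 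The delicate point, and the place where the Lorentzian signature is indispensable, is proving that the curvature genuinely factors through the one-dimensional null image rather than merely being nilpotent on $E_0$.
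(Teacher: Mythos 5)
Your structural trichotomy is sound and follows essentially the paper's own route: both arguments compare the Ricci decomposition with the primitive one, use $V_0\subseteq\ker(\Ri)\subseteq E_0$ together with the fact that the blocks $V_i$ ($i\geq2$) are Einstein Riemannian to push them into $E_1$, and then split case~2 according to whether $V_0\cap V_1$ is trivial. The problem is the closing claim $R^2=0$, which is the only substantive assertion of case~(b), and your treatment of it has two genuine defects.

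First, the factorization you invoke cannot hold: a nonzero skew-symmetric endomorphism of a Lorentzian space never has image equal to a null line (if $Aw=f(w)p$ with $p$ isotropic, skew-symmetry gives $f(w)\langle p,w\rangle=0$ for all $w$, hence $A=0$). What is actually true, and what the paper proves, is that $R(X,Y)=-p\we E$ for some $E\in p^{\perp}$, whose image is the two-plane $\mathrm{span}\{p,E\}$. Reaching that normal form requires two inputs your sketch does not supply: the Bergery--Ikemakhen classification of weakly irreducible Lorentzian holonomy algebras, which writes every element of $\h_0(R_m)$ as $\Ku_0-p_m\we X$ with $\Ku_0\in\mathfrak{so}(\widehat{E}_0(m))$ acting on the Riemannian quotient $\widehat{E}_0(m)$; and Simons' theorem on irreducible symmetric holonomy systems, which forces the induced semisymmetric curvature $\widehat{R}$ on that irreducible Riemannian quotient to vanish because its Ricci is zero. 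Without these, nothing in skew-symmetry, invariance of the filtration $\mathrm{span}\{p\}\subset p^{\perp}\subset E_0$, the relation \eqref{eq1-9} or the commutation \eqref{eq1-10} rules out a nontrivial block $\Ku_0$, and the product of two curvature operators would then retain a term $\Ku_0\circ\Ku_0'$ that need not vanish. Second, even granting $R(X,Y)=-p\we E$, the composition $R(u,v)\circ R(a,b)$ of curvature operators on $TM$ does not vanish in general (it sends the dual vector $q$ to a multiple of $p$, since $(p\we E')\circ(p\we E)$ is $3$-step, not $2$-step, nilpotent when $E$ is non-null); the identity $R^2=0$ must be read for $R$ as the symmetric endomorphism of $\we^2TM$, where it follows at once from $R(p,\cdot)=0$ because $p\in V_0$. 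So the conclusion is correct, but your proposed mechanism both lacks the two key inputs and misidentifies the operator whose square vanishes.
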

\begin{proof}
	The   primitive and Ricci decompositions imply that   $$V_0=E_0\;\text{ or}\; V_0\varsubsetneq E_0.$$
	
	If $V_0\varsubsetneq E_0$, this means  that the curvature tensor is non-zero on ${\cal N}_0$ and  the action of the primitive holonomy algebra $ \h_0 (R) $ on $ E_0 $ induces a non-trivial  decomposition:
	$$E_0=  V_0+V_1+ (V_0\cap V_1)',,$$ where $V_{0}:\{x\in E_0/\; \forall h\in\h_0(R),\; h(x)=0\}, V_{1}:=\h_0(R)( E_0)\neq \{0\}$ and $(V_0\cap V_1)'$ is the dual subspace of $ V_0\cap V_1$. 
	
	By   proposition\ref{pr32}, we show that the distribution $  V_{0} + V_{1} $ is involutive  and since $ M $ is a simple leaf, we get that $TM=E_0$.
	
	So, if $V_0\cap V_1\neq \{0\}$ is  a non-trivial subspace, then it is generated by an  istropic parallel vector fields $p$  and  according to the classification of weakly irreducible holonomy algebras of Lorentzian manifolds given by L. B. Bergery and A. Ikemakhen in \cite{ikemakhen}, for any point $ m \in M $, $ \; \h_0 (R_m) $ is a Lie subalgebra of   type  $ 2 $ or $4$. Consequently, each element $ \Ku \in \h_0 (R_m) $ is written in the form
	$$\Ku = \Ku_0-p_m\wedge X,$$
	where 
$\; X\in  { \widehat{E}}_0(m)\cong {E_0(m)}_{/\{p,q\}}$ and  $\Ku_0\in \mathfrak{so}({\widehat{E}_0(m)})\cong\we^2\R^r$, such that $r=\dim(E_0) -2$ and $q$ is an isotropic vector fields which $g(p,q)=1$,  .   
	
	Then the restriction  of the  curvature on the space $E_0(m)$ checks  
	$$\forall (X,Y)\in E_0^2(m),\;  \exists\; E\in { \widehat{E}}_0(m)\; \text{ such that }\;  R_m(X,Y)=\widehat{R}(\widehat{X},\widehat{Y}) -p_m\we E,$$
	 where
	$\widehat{X}$ and $\widehat{Y}$ are the projection respectively  of the $X$ and $Y$ on the subspace $ { \widehat{E}}_0(m)$ and   
	$\widehat{R}$  is the restrection of $R_m$ on $ { \widehat{E}}_0(m)$ and so  $\widehat{R}$ is a semisymmetric  curvature tensor on the irreducible Riemannian space $ { \widehat{E}}_0(m)$ where $\Ri_m=0$. Then $$\widehat{R}=0 \esp R_m(X,Y)= -p_m\we E.$$

thus $$R^2=0.$$
\end{proof}

\begin{pr}\label{pr5vrsi}Let $ (M, g) $ be a connected   simple leaf semi-symmetric Lorentzian manifold. Then the  Ricci operator admits  at most an one  non-zero real eigenvalue. If  $\la$ is such an eigenvalue, then
	the tangent space of $ M $ at any point $ m \in M $ will be of the form:
	\begin{equation}\label{v-dec-irre-l} T_mM=
	\ker\bigg(\Ri_m-\la(m)Id_{T_mM}\bigg)\oplus V_0(m),\end{equation}
	  	   	   Moreover, if $\dim(E_1)\geq3$, then the function $m \in M\val \la(m)$ is of  class $C^\infty$ and it  depends only on  ${\cal N}_0$ the flat  integral submanifolds of $V_0=E_0$.

\end{pr}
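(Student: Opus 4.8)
The plan is to combine the Ricci decomposition from Theorem~\ref{pr1-3-1} with the Lorentzian constraint of Proposition~\ref{pr1} and the simple-leaf hypothesis of Lemma~\ref{lemme43}. First I would recall that since $(M,g)$ is simple leaf, the tangent bundle admits only one nondegenerate involutive subbundle; by Lemma~\ref{lemme43} this forces $TM=E_0\overset{\bot}{\oplus}E_1$ (with $E_1=\ker(\Ri-\la\,\mathrm{id})$) or $TM=E_0$. The Lorentzian hypothesis is the key restriction: by Proposition~\ref{pr1}, all eigenvalues of $\Ri_m$ are real, and moreover exactly one of the diagonalizable case (a) or the isotropic case (b) occurs, so $E_0=\ker(\Ri_m)$ or $E_0=\ker(\Ri_m^2)$. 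The first task is to rule out two or more distinct non-zero eigenvalues: if $\la_1\neq\la_2$ were both non-zero, then by item~4 of Theorem~\ref{pr1-3-1} we would have $\dim E_{\la_1},\dim E_{\la_2}\geq 2$, and by item~3, $\Ku_{|E_{\la_1}\wedge E_{\la_2}}=0$, so both $E_{\la_1}+E_0$ and $E_{\la_2}+E_0$ would be nondegenerate involutive subbundles (Corollary~\ref{co31}), contradicting the simple-leaf assumption. Hence at most one non-zero eigenvalue $\la$ survives, and the decomposition \eqref{v-dec-irre-l} follows from $T_mM=E_1(m)\oplus E_0(m)$ together with $V_0(m)\subseteq E_0(m)$ and the fact that on the eigenspace $E_1$ the primitive kernel $V_0$ must be trivial.

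For the smoothness and dependence statement, I would argue as follows. The eigenvalue $\la(m)$ is recovered pointwise as a trace: using an orthonormal frame adapted to the decomposition \eqref{v-dec-irre-l}, one has $\la(m)\dim E_1 = \operatorname{trace}\big(\Ri_m|_{E_1}\big)=\mathfrak{ric}_m$ restricted to $E_1$, which is a smooth function of $m$ because $\Ri$ and the frame vary smoothly; dividing by the (locally constant) integer $\dim E_1\geq 3$ gives that $m\mapsto\la(m)$ is $C^\infty$. To show $\la$ depends only on $\Ni_0$, I would invoke the second Bianchi identity in the form already exploited in the proof of Proposition~\ref{pr121}: for $X\in E_0=V_0$ and $Y\in E_1$, differentiate the eigenvalue relation $\Ri(Y)=\la\,Y$. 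Contracting $\na_X(\Ri(Y))=X(\la)\,Y+\la\,\na_XY$ against $Y$ and using that $\na_X E_1\subset E_1$ (from \eqref{l}) together with $\na_X\Ri(Y)\in E_1$ and the commutation $\Ku(u,v)\Ri=\Ri\,\Ku(u,v)$ of Lemma~\ref{Le1-32}, one extracts $X(\la)=0$ for every $X\in V_0$; that is, $\la$ is constant along the leaves of $V_0=E_0$, hence descends to a function of the quotient by $\Ni_0$.

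The main obstacle I anticipate is the step showing $X(\la)=0$ for $X\in V_0$, i.e.\ that $\la$ is genuinely a function on the leaf space of $\Ni_0$ and not merely on each individual leaf of $E_1$. The delicate point is that $\na_X\Ri(Y)$ need not equal $\na_X(\la Y)$ unless one carefully tracks that $\na_X$ preserves the eigenbundle $E_1$ and annihilates the orthogonal $E_0$-component of the derivative of $\Ri$; this is exactly where the hypothesis $\dim E_1\geq 3$ enters, since it guarantees that the Riemannian eigenspace $E_1$ is large enough for the semi-symmetry relation \eqref{eq1-9} to force the curvature-contraction terms to vanish, so that the derivative of $\Ri$ along $V_0$ cannot produce a spurious component tangent to $E_1$. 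I would treat the diagonalizable case (a) and the isotropic case (b) separately here, since in case (b) the extra null direction $q$ in $V_0'$ requires checking that the eigenvalue computation on $E_1$ is unaffected by the degenerate part of $E_0$.
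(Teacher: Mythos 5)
Your treatment of the first assertion (at most one non-zero eigenvalue and the splitting \eqref{v-dec-irre-l}) is in the spirit of the paper, which simply invokes Corollary~\ref{lemma33} and Proposition~\ref{proposition41}: ruling out two distinct non-zero eigenvalues by exhibiting two nondegenerate involutive subbundles $E_0+E_{\la_1}$ and $E_0+E_{\la_2}$ is a reasonable reconstruction, though you should still justify $E_0=V_0$ (your remark only yields $V_0\subseteq E_0$; the reverse inclusion is exactly case~1 of Proposition~\ref{proposition41}, which applies because $E_1\neq\{0\}$ excludes the case $TM=E_0$).

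The last part of your proposal contains a genuine error of direction. In this paper ``$\la$ depends only on ${\cal N}_0$'' means that $X(\la)=0$ for every $X$ tangent to $E_1$, so that $\la$ is a function of the coordinates \emph{on} the flat leaf ${\cal N}_0$; it does \emph{not} mean that $\la$ is constant along ${\cal N}_0$ and descends to the leaf space. Your computation aims at $X(\la)=0$ for $X\in V_0=E_0$, which is the opposite statement and is false in general: differentiating $\Ri(Y)=\la Y$ along $e_i\in E_0$ and using $\na_{e_i}E_1\subset E_1$ gives $(\na_{e_i}\Ri)(Y)=e_i(\la)\,Y$, and there is no reason for this to vanish; indeed the paper's later formula \eqref{eq22}, namely $2\,\mathrm{trace}(A_i)=-\frac{n-r}{\la}\frac{\partial\la}{\partial u_i}$, presupposes that $\frac{\partial\la}{\partial u_i}$ is generically non-zero along ${\cal N}_0$. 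The paper's actual argument is a Schur-type computation: apply the contracted second Bianchi identity $\delta(\mathfrak{ric})=-\frac{1}{2}d\mathfrak{s}$ to a vector $X_j$ tangent to $E_1$ and reduce it to $0=\left(\frac{\dim E_1}{2}-1\right)X_j(\la)$. The hypothesis $\dim E_1\geq 3$ enters solely to make the coefficient $\frac{\dim E_1}{2}-1$ non-zero (recall $\dim E_1\geq 2$ always holds by Theorem~\ref{pr1-3-1}), not through any ``largeness'' of $E_1$ forcing curvature contractions to vanish as you suggest. Your smoothness argument via the trace is acceptable, but the dependence statement must be redone along these lines.
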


\begin{proof}$\;$

Let $ (M, g) $ be a connected, semi-symmetric, simple leaf Lorentzian manifold. Let $ \la $ be a non-zero eigenvalue of $ \Ri $. 

The equation\ref{v-dec-irre-l}  is a resultat of Lemma\ref{lemma33} and Proposition\ref{proposition41}.

Now, we consider the codifferential $ \delta $ on $ M $ given by:
\[\delta (\al) (Y_1, ..., Y_r) = - \Sigma_i (\na_{X_i} \al) (X_i, Y_1, ..., Y_r), \] where
$ \al $ is a $ (r + 1)-$differential form, $ (X_1, ..., X_n) $ is an orthonormal fram on $ M $ and $ (Y_1, ..., Y_r ) $ a family of $ r $ vector fields (See \cite {besse}, page 34).

The Ricci tensor verifies:
	
	\begin{equation}\label{besse(ds)}
	\delta(\mathfrak{ric})=-\frac{1}{2}d (\mathfrak{s}),
	\end{equation}
	where $\mathfrak{s}$ is the scalar curvature, $ d $ is the exterior differential over $ M $ (See proposition 1.94 \cite{besse} page 43).
	
	Let $(X_1,...,X_n)$ be a   orthonormal fram of $TM$ whit  the $X_i$ are tangent to $E_0$ for $i\leq\nu=\dim(E_0)$ and for $i> \nu$, the $X_i$ are tangent to $E_1$ .
	
	Let us $j> \nu$.  we  get:
	\begin{eqnarray*}
		0&=&\frac{1}{2}d(\mathfrak{s})(X_j)+\delta(\mathfrak{ric})(X_j)\\
		&=&\frac{1}{2}X_j(\mathfrak{s})-\Sigma_i(\na_{X_i}\mathfrak{ric})(X_i,X_j)
		\\
		&=&\frac{1}{2}X_j((n-\nu) \la)-\Sigma_i {X_i}.(\mathfrak{ric}(X_i,X_j))-\mathfrak{ric}(\na_{X_i}X_i,X_j)
		-\mathfrak{ric}(X_i,\na_{X_i}X_j),  \\
		&=&\frac{n-\nu}{2}X_j(\la)-\Sigma_i  ({X_i}.\la)g(X_i,X_j) +\la{X_i}.g(X_i,X_j)-\mathfrak{ric}(\na_{X_i}X_i,X_j)
		-\mathfrak{ric}(X_i,\na_{X_i}X_j),  \\
		&=&(\frac{n-\nu}{2}-1)X_j(\la). \\
	\end{eqnarray*}
	
	So  $n-\nu\geq3$  implies that $ X_j (\la) = 0 $ and therefore, $ \la $ only depends on the integral submanifolds  of $ E_0 $.
	
	\end{proof}



\subsection{Basic formulas}

	In this subsection, we 	 
  consider $ (M, g) $   a simple leaf semi-symmetric Lorentzian manifold of dimension $ n   $ such that the Ricci   admits a non-zero eigenvalue $ \la $ of multiplicity $n-r$ such that  $E_0$ is a Lorantzian subspace.
  
 So at any point $ m \in M $, the tangent space splits as:
\[ T_mM=E_0(m)\oplus E_1(m),\]
where \[E_0(m)=\ker(\Ri_m) ~\text{ and } ~E_1(m)=\ker(\Ri_m-\la(m)id_{T_mM}).\]
and ${\cal N}_0$ the integral submanifold of $ E_ {0} $   is   flat.

The real number  $r  =\dim(E_0(m))$ called the nullity index  of the curvature at the point $ m $. The  multiplicity  of eingenvalue $\la$ also called the co-nullity index. 

So ${\cal N}_0$ the integral submanifold of $ E_ {0} $   is   flat,   then we can shoose 
 a fram
 $(e_1,...,e_r)$  of $E_{0}$ such that,  for all $i\geq2$,    
 $\varepsilon_i= g(e_i,e_i)=-g(e_1,e_1)=1,\esp  g(e_i,e_j)=0$,  for all $ ~~1\leq i\neq j\leq r.$
 
  Let us $(u_1,...,u_r)$
  a local coordinate system associated to  $(e_1,...,e_r)$, i.e.   $e_i=\frac{\partial}{\partial u_i}$ for all $1\leq i\leq r$.
  
   On the other hand, $R(E_0,E_0)E_1=0$, then
 we can also choose  $(X_1,...,X_{n-r})$   an orthonormal fram of  $E_1$ such that, for all $1\leq j\leq n-r$, $X_j$ is parralel on ${\cal N}_0$, i.e. $\nabla_{e_{i}}X_{j}=0$.

For all vector fields $e_i$ and for all vector fields  $X$, we set
\begin{equation}
\nabla_{X}e_i=A_{i}(X)+\sum_{j=1}^{r}B_{i}^j(X)e_j,
\end{equation}
where $A_{i}(X)$  is the orthogonal projection of $\nabla_{X}e_i$ on
$E_{1}$.

$A_{i}$  is $(1,1)-$tensor   on  $M$
which is zero on $ E_{0} $ and   $ B_{i}^j $ are  covariant tensors on $ M $ wich they  have the value zero  on $ E_{0} $.

Moreover, we get

\begin{equation}
B_{i}^j(X)=\varepsilon_jg(\nabla_{X}e_i,e_j)=-\varepsilon_jg(e_i,\nabla_{X}e_j)=-\varepsilon_j\varepsilon_iB^{i}_j(X).
\end{equation}
\begin{Def}
The field tensors $ A_i $ and $ B_{i}^j $ are called the
second fundamental forms corresponding to the
system  $\{e_1,...,e_r\}$.
\end{Def}

We define the $(0,2)$-tensor $B^i$   by:
\begin{equation}\label{M(0,2)forme}
\begin{array}{ccccc}
B^i(X,Y) &:=& -g(A_i(X),Y),& &\\
B^i(X,e_j) &:=&B^i(e_j,X)&=&B^i(e_i,e_l)=0,
\end{array}
\end{equation}
where $X$ and $Y$ are  vectors fields tangent to $E_1$.
\begin{Le}\label{lemma41}	
	The second fundamental forms $A_i$  and $B_j^i$ and the curvature $ R $ satisfy the following properties:
	\begin{equation}\label{eq22}
	2trace(A_i)=-\frac{(n-r)}{\la}\frac{\partial \la}{\partial u_i},~~~~1\leq i\leq r,
	\end{equation}
	\begin{equation}\label{naB1}
\na_{e_i}A_j(X)=-A_j\circ A_{i}(X),
\end{equation}
	\begin{equation}\label{naB2}
(\na_{e_i}B_j^k)(X)=-B_j^k(A_i(X)),
\end{equation}

	\begin{equation}\label{nabRRoBxy}
	(\na_{e_i}R)(X,Y)=R(Y,A_{i}(X))+R(A_{i}(Y),X),
	\end{equation}
	
	\begin{equation}\label{BianBB}
	R(X,Y)A_{i}(Z)+R(Y,Z)A_{i}(X)+R(Z,X)A_{i}(Y)=0,
	\end{equation}
	for $X$, $Y$, $Z \in  E_{1}$. \end{Le}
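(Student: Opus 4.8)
The plan is to exploit a few structural facts that are already available in this setting and then read each identity off a short computation. First, since $\mathcal{N}_0$ is flat and totally geodesic (the latter because $\na_{E_0}E_0\subseteq E_0$ by Proposition~\ref{pr121}), the adapted coordinate frame satisfies $\na_{e_i}e_j=0$; the $X_b$ were chosen with $\na_{e_i}X_b=0$; and by Proposition~\ref{pr121} again $\na_{e_i}$ preserves $E_1$. Second, combining $R(E_0,E_0)E_1=0$ with the flatness and total geodesic property of the leaf (so that the intrinsic and ambient curvatures agree on $E_0$, giving $R(e_i,e_j)e_k=0$) and with $R_{|E_0\we E_1}=0$ from Theorem~\ref{pr1-3-1}, one concludes that $E_0$ lies in the nullity of $R$, i.e. $R(e_i,\,\cdot\,)=0$ for every $i$. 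Third, $E_1$ is Riemannian, $\Ri$ vanishes on $E_0$ and equals $\la\,\mathrm{Id}$ on $E_1$, so that $\mathfrak{ric}(\,\cdot\,,e_i)\equiv0$ and $\mathfrak{s}=(n-r)\la$.

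For \eqref{eq22} I would evaluate the identity $\de(\mathfrak{ric})=-\frac12 d\mathfrak{s}$ of \eqref{besse(ds)} on $e_i$, using the orthonormal frame $(e_1,\dots,e_r,X_1,\dots,X_{n-r})$. Since $\mathfrak{ric}(\,\cdot\,,e_i)=0$ and $\Ri e_i=0$, a direct expansion collapses each $(\na_W\mathfrak{ric})(U,e_i)$ to $-\la\,g(U,A_i(W))$; the $E_0$-directions drop out because $A_i$ kills $E_0$, and the $E_1$-directions give $\de(\mathfrak{ric})(e_i)=\la\,\mathrm{trace}(A_i)$. Comparing with $-\frac12 d\mathfrak{s}(e_i)=-\frac12(n-r)\frac{\partial\la}{\partial u_i}$ yields \eqref{eq22}. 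Identities \eqref{naB1} and \eqref{naB2} I would obtain simultaneously from the single equation $R(e_i,X_b)e_j=0$ (a consequence of $R_{|E_0\we E_1}=0$). Expanding $R(e_i,X_b)=\na_{e_i}\na_{X_b}-\na_{X_b}\na_{e_i}-\na_{[e_i,X_b]}$, using $\na_{e_i}e_j=0$, the commutator $[e_i,X_b]=-\na_{X_b}e_i=-A_i(X_b)-\sum_kB_i^k(X_b)e_k$, and the defining relation $\na_Xe_j=A_j(X)+\sum_kB_j^k(X)e_k$, the whole expression splits into an $E_1$-part and an $E_0$-part. As $\na_{e_i}$ preserves $E_1$, the $E_1$-part reads $\na_{e_i}(A_j(X_b))+A_j(A_i(X_b))=0$, which is \eqref{naB1}, while the $E_0$-part reads $e_i(B_j^k(X_b))+B_j^k(A_i(X_b))=0$, which is \eqref{naB2}; both extend to all $X\in E_1$ by tensoriality.

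For \eqref{nabRRoBxy} I would use the second Bianchi identity $(\na_{e_i}R)(X,Y)+(\na_XR)(Y,e_i)+(\na_YR)(e_i,X)=0$. The nullity property $R(\,\cdot\,,e_i)=0$ reduces $(\na_XR)(Y,e_i)$ to $-R(Y,\na_Xe_i)=-R(Y,A_i(X))$ (the $B_i^k$-terms vanish because $R(\,\cdot\,,e_k)=0$), and the third term is handled the same way; this gives \eqref{nabRRoBxy} at once. Finally, \eqref{BianBB} is the most delicate step: I would feed \eqref{nabRRoBxy} into the algebraic first Bianchi identity satisfied by the tensor $\na_{e_i}R$, namely $(\na_{e_i}R)(X,Y)Z+(\na_{e_i}R)(Y,Z)X+(\na_{e_i}R)(Z,X)Y=0$, and then apply the first Bianchi identity for $R$ itself to the triples $\{Y,A_iX,Z\}$, $\{Z,A_iY,X\}$ and $\{X,A_iZ,Y\}$ in order to reorganize the six resulting terms into $R(X,Y)A_iZ+R(Y,Z)A_iX+R(Z,X)A_iY$.

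The main obstacle is precisely this bookkeeping in \eqref{BianBB}: keeping the three slots and the signs straight while converting expressions of the form $R(\,\cdot\,,A_i\,\cdot\,)\,\cdot\,$ into expressions of the form $R(\,\cdot\,,\cdot\,)A_i\,\cdot\,$. The other place demanding care is the sign tracking through the commutator $[e_i,X_b]$ and the $E_0$/$E_1$ splitting used for \eqref{naB1} and \eqref{naB2}; once the nullity property $R(e_i,\,\cdot\,)=0$ and the parallelism relations $\na_{e_i}e_j=\na_{e_i}X_b=0$ are in place, the remaining computations are routine.
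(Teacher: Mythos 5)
Your proposal is correct, and for the first four identities it follows the paper's own route exactly: \eqref{eq22} from the contracted second Bianchi identity $\delta(\mathfrak{ric})=-\frac12 d\mathfrak{s}$ evaluated on $e_i$, \eqref{naB1} and \eqref{naB2} from expanding $R(e_i,X)e_j=0$ against the splitting $\na_Xe_j=A_j(X)+\sum_kB_j^k(X)e_k$, and \eqref{nabRRoBxy} from the second Bianchi identity together with $R(e_i,\cdot)=0$. The only genuine divergence is \eqref{BianBB}: the paper obtains it by writing the second Bianchi identity for the three $E_1$-directions $X,Y,Z$, applying it to the vector $e_i$, and using $R(\cdot,\cdot)e_i=0$ so that only the terms $-R(Y,Z)\na_Xe_i$ and its cyclic permutations survive; you instead substitute \eqref{nabRRoBxy} into the first Bianchi identity satisfied by the tensor $\na_{e_i}R$ and then regroup the six terms using the first Bianchi identity for $R$ on the triples $(Y,A_iX,Z)$, $(Z,A_iY,X)$, $(X,A_iZ,Y)$. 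I checked the bookkeeping you were worried about: each pair such as $R(Y,A_iX)Z+R(A_iX,Z)Y$ collapses to $R(Y,Z)A_iX$, so the six terms sum to exactly \eqref{BianBB}. Both derivations rest on the same key fact, namely that $E_0$ lies in the nullity of $R$ (which in your write-up you justify via flatness, total geodesy and $R_{|E_0\we E_1}=0$, and which in the paper's setting follows from $E_0=V_0$ together with the symmetry $\langle R(a,b)u,v\rangle=\langle R(u,v)a,b\rangle$); the paper's version of \eqref{BianBB} is marginally shorter because it never needs \eqref{nabRRoBxy} as an intermediate step, while yours makes the logical dependence of \eqref{BianBB} on \eqref{nabRRoBxy} explicit.
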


\begin{proof}
		
	The formula (\ref{eq22}) is result of	 the formula(\ref{besse(ds)}). Indeed;
		
		\[\delta(\mathfrak{ric})(e_i)=-\frac{1}{2}d (\mathfrak{s})(e_i)\]

	Let us $X$, $Y$, $Z\in E_1$.\\
	Formulas (\ref{naB1}) and (\ref{naB2}) follow from the equation $R(e_i,X)e_j=0$.
	
	The formulas (\ref{nabRRoBxy}) and (\ref{BianBB}) follow  frome the second Bianchi identity, indeed: 
	\begin{eqnarray*}
	(\na_{e_i}R)(X,Y)&=&(\na_XR)(e_i,Y)+(\na_YR)(X,e_i)\\
	&=&[\na_X,R(e_i,Y)]-R(\na_Xe_i,Y)-R(e_i,\na_XY)\\
	&&+[\na_Y,R(X,e_i)]-R(\na_YX,e_i)-R(X,\na_Ye_i)\\
	&=&-R(\na_Xe_i,Y)
-R(X,\na_Ye_i)\\
&=&R(Y,A_i(X))+R(A_i(Y)+X)
	\end{eqnarray*}
So $R(X,Y)(e_i)=R(Z,X)(e_i)=R(Y,Z)(e_i)=0$, 

we get
	\begin{eqnarray*}
	0&=&(\na_{X}R)(Y,Z)(e_i)+(\na_YR)(Z,X)(e_i)+(\na_ZR)(X,Y)(e_i)\\
	&=&-R(Y,Z)(\na_{X}e_i)-R(Z,X)(\na_{Y}e_i)-R(X,Y)(\na_{Z}e_i)
\end{eqnarray*}
\end{proof}

\begin{co}
	$(\nabla_{e_i}R)$ is a curvature tensor on $M$ and  $\nabla_{e_i}R(X,Y) \in \h_1(R)$ for all $X$, $Y\in E_1$ .
\end{co}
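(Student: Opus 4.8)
The plan is to establish the two assertions separately, using the identities (\ref{nabRRoBxy}) and (\ref{BianBB}) of Lemma \ref{lemma41}. For the membership $\nabla_{e_i}R(X,Y)\in\h_1(R)$ I would argue directly from (\ref{nabRRoBxy}). By construction $A_i$ is the orthogonal projection of $\nabla_\bullet e_i$ onto $E_1$, so $A_i(X),A_i(Y)\in E_1$ whenever $X,Y\in E_1$. Hence in
\[ (\nabla_{e_i}R)(X,Y)=R(Y,A_{i}(X))+R(A_{i}(Y),X) \]
each summand is of the form $R(u,v)$ with $u,v\in E_1$, so it lies in $\h_1(R)=\{R(u,v)\,/\,u,v\in E_1\}$. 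Since $\h_1(R)$ is a Lie subalgebra by Theorem \ref{pr1-3-1}, in particular a vector subspace of $\mathrm{so}(TM)$, the sum again lies in $\h_1(R)$, which settles the second claim.

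For the first claim I would check that $\nabla_{e_i}R$ satisfies the four defining symmetries of a curvature tensor listed in Lemma \ref{le1-1}. The two skew-symmetries and the pair-exchange symmetry are inherited from $R$: since the connection is metric ($\nabla g=0$), covariant differentiation commutes with lowering indices and with permuting the arguments, so $(\nabla_{e_i}R)(a,b,u,v)$ is skew in $(a,b)$, skew in $(u,v)$, and invariant under the exchange $(a,b)\leftrightarrow(u,v)$. The only property requiring work is the first Bianchi identity. On $E_1$ I would derive it from (\ref{nabRRoBxy}): expanding the cyclic sum $(\nabla_{e_i}R)(X,Y)Z+(\nabla_{e_i}R)(Y,Z)X+(\nabla_{e_i}R)(Z,X)Y$ into its six terms and pairing them off via the first Bianchi identity of $R$ applied to triples such as $(Y,A_i(X),Z)$, the whole sum collapses to
\[ R(Y,Z)A_i(X)+R(Z,X)A_i(Y)+R(X,Y)A_i(Z), \]
which vanishes by (\ref{BianBB}). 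For arguments taken from the full decomposition $TM=E_0\oplus E_1$, the Bianchi identity follows more cheaply by covariantly differentiating the identically vanishing first Bianchi tensor of $R$.

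The step I expect to be the main obstacle is the bookkeeping in the Bianchi verification on $E_1$: one must match each of the six terms with the correct Bianchi triple for $R$ and keep track of the signs so that the residual is exactly the left-hand side of (\ref{BianBB}). Once that matching is in place the cancellation is forced, and since the remaining symmetries are automatic, no further genuine difficulty remains.
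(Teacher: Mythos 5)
Your proposal is correct and follows exactly the route the paper intends: the paper states this corollary without proof as an immediate consequence of Lemma \ref{lemma41}, and your argument simply makes that explicit --- membership in $\h_1(R)$ from (\ref{nabRRoBxy}) together with the fact that $A_i$ takes values in $E_1$ and $\h_1(R)$ is a subspace, and the algebraic curvature identities (in particular the first Bianchi identity) either by covariantly differentiating the identities satisfied by $R$ or, on $E_1$, via the cancellation against (\ref{BianBB}), which does close up as you describe. No gap.
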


\begin{pr}There exists a function $ \mu_i $ of class $ C^{\infty} $ on $ M $ such that \begin{equation}\label{R-nabla-R}
	\nabla_{e_i}R(X,Y)=-2\mu_i R(X,Y),
	\end{equation} where $X$, $Y\in E_1$.
\end{pr}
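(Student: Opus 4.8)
Write $\mathcal{R}_i:=\na_{e_i}R$ and regard it, like $R$, as a symmetric operator on $\we^2E_1$. By the preceding corollary $\mathcal{R}_i$ is a curvature tensor with $\mathcal{R}_i(X,Y)\in\h_1(R)$ for $X,Y\in E_1$, and by \eqref{nabRRoBxy} it has the explicit form $\mathcal{R}_i(X,Y)=R(Y,A_i(X))+R(A_i(Y),X)$. Since $E_0$ is Lorentzian and $M$ is a simple leaf, $E_1=E_0^{\bot}$ is a positive definite subspace on which the primitive holonomy algebra $\h_1(R)$ acts irreducibly; thus $[E_1,\h_1(R),R_{|E_1}]$ is an irreducible Riemannian symmetric holonomy system. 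The goal is to prove $\mathcal{R}_i=-2\mu_iR$ on $\we^2E_1$ for a smooth function $\mu_i$, which by the displayed form is equivalent to $A_i=\mu_i\,\mathrm{id}_{E_1}$.

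\emph{Step 1 (invariance of $\mathcal{R}_i$).} First I would differentiate the semi-symmetry identity \eqref{eq1-9}, written for the Riemann tensor as $[R(a,b),R(X,Y)]=R(R(a,b)X,Y)+R(X,R(a,b)Y)$, along $e_i$. Choosing the frame fields of $E_1$ with $\na_{e_i}X_j=0$ and using $\na_{E_0}E_1\subset E_1$, every term in which $e_i$ hits an argument drops, and $\na_{e_i}(R(a,b))=\mathcal{R}_i(a,b)$. A short computation then yields, for all $a,b\in E_1$, $(R(a,b)\cdot\mathcal{R}_i)=-(\mathcal{R}_i(a,b)\cdot R)$, where $\cdot$ is the action \eqref{eq1-6}. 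Because $\mathcal{R}_i(a,b)\in\h_1(R)$ and $R$ is semi-symmetric, the right-hand side vanishes (this is exactly $\h_1(R)\cdot R=0$); hence $R(a,b)\cdot\mathcal{R}_i=0$ for all $a,b\in E_1$, i.e. $\mathcal{R}_i$ is $\h_1(R)$-invariant.

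\emph{Step 2 (the shape operator is scalar).} This is the step I expect to be the main obstacle. The operator $A_i$ is symmetric, so $E_1$ splits orthogonally into its eigenspaces. Feeding eigenvectors into the Codazzi--Bianchi relation \eqref{BianBB} and subtracting the first Bianchi identity for $R$ shows that, for two distinct eigenvalues, the corresponding eigenspaces decouple under $R$; pushing this through (together with the pair symmetry \eqref{eq1-5} and the invariance from Step 1) one gets that each eigenspace of $A_i$ is $\h_1(R)$-invariant. Since $E_1$ is irreducible, $A_i$ can have only one eigenvalue, so $A_i=\mu_i\,\mathrm{id}_{E_1}$ for a function $\mu_i$ on $M$. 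Substituting into the form of $\mathcal{R}_i$ gives $\na_{e_i}R(X,Y)=2\mu_iR(Y,X)=-2\mu_iR(X,Y)$.

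\emph{Step 3 (identification and smoothness of $\mu_i$).} Finally, taking the Ricci contraction of $\na_{e_i}R=-2\mu_iR$ and using that contraction commutes with $\na$ (so $\Ri_{\na_{e_i}R}=\na_{e_i}\Ri$, which on $E_1$ equals $\frac{\partial\la}{\partial u_i}\,\mathrm{id}$) I obtain $\mu_i=-\frac{1}{2\la}\frac{\partial\la}{\partial u_i}$; this matches \eqref{eq22} since $\mathrm{trace}(A_i)=(n-r)\mu_i$, and it is of class $C^{\infty}$ because $\la$ is smooth and nowhere zero on the eigendistribution $E_1$. The delicate point throughout is Step 2: the bare Bianchi symmetries of $[A_i,R(X,Y)]$ do not by themselves force it to vanish, so one genuinely needs the irreducibility of $E_1$ (guaranteed by the simple-leaf, Lorentzian hypotheses) to conclude that $A_i$ is a multiple of the identity.
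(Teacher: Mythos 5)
Your Step 1 is correct and in fact supplies a detail the paper leaves implicit: differentiating the semi-symmetry identity along $e_i$ with a frame satisfying $\na_{e_i}X_j=0$ does give $R(a,b)\cdot(\na_{e_i}R)=-(\na_{e_i}R)(a,b)\cdot R=0$, so that $[E_1(m),(\na_{e_i}R)_m,{\cal H}_1(R_m)]$ is a symmetric holonomy system. But at that point you should have stopped and invoked the result the paper actually uses, namely the corollary of Theorem~6 of Simons: two \emph{irreducible} symmetric holonomy systems on the same space with the same holonomy algebra have proportional curvature tensors, so $(\na_{e_i}R)_m=-2\mu_i(m)R_m$ for some real $\mu_i(m)$, and smoothness of $\mu_i$ follows from smoothness of the two tensors (your Step 3 identification $\mu_i=-\frac{1}{2\la}\frac{\partial\la}{\partial u_i}$ is a fine way to see this).

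Your Step 2 contains a genuine error. You assert that $A_i$ is symmetric and conclude $A_i=\mu_i\,\mathrm{id}_{E_1}$; neither is true in general. Since $g(A_i(X),Y)-g(A_i(Y),X)=-g(e_i,[X,Y])$ and the distribution $E_1$ need not be involutive, $A_i$ can have a nonzero skew-symmetric part — and indeed the paper's very next lemma allows $A_i=\mu_i I+\la_i J$ with $J$ skew-symmetric and $J^2=-I$. The proposition does not require $A_i$ to be scalar: substituting $A_i=\mu_i I+\la_i J$ into \eqref{nabRRoBxy} gives $\na_{e_i}R(X,Y)=-2\mu_i R(X,Y)+\la_i\bigl(R(Y,JX)+R(JY,X)\bigr)$, and the extra term vanishes only because $J$ commutes with the holonomy (so $R(J\cdot,J\cdot)=R(\cdot,\cdot)$) — a fact the paper establishes \emph{after}, and as a consequence of, this proposition, so you cannot use it here. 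Thus your route both proves too much (forcing $A_i$ scalar, contradicting the later lemma) and rests on an unjustified symmetry of $A_i$; the eigenspace-decoupling argument you sketch would at best handle the case where $E_1$ is involutive. Replace Step 2 by the appeal to Simons' uniqueness theorem and the proof is complete.
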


\begin{proof}$  $
	
	Let $ m $ be a point in $ M $. So the primitive holonomy group $ {\cal H}_1 (R_m) $  acts irreducibly on $ E_1 (m) $. Therefore,  $[E_1(m),R_m,{\cal H}_1(R_m)]$ and $[E_1(m),(\nabla_{e_i}R)_m,{\cal H}_1(R_m)]$ are two  Riemannian irreducible  symmetric holonomy systems. According to the corollary of Theorem.6 in (\cite{simon}), there exists a  real $ \mu_i(m) $ satisfying 
	\[(\nabla_{e_i}R)_m=-2\mu_i(m) R_m.\]
So the tensors $ \nabla_{e_i} R $ and $ R $ are of class $ C^{\infty} $, then  the function $ \mu_i $ is of class $ C^{\infty} $. \end{proof}

\begin{Le}
	Any second fundamental form $ A_i $ on $ E_1 $ has one of the following forms $ A_i = \mu_i I $ or else $ n \in 2\N $ and $ A_i = \mu_i I + \la_i J $ with $J^2=-I$, where  $ \mu_i $ and $ \la_i $ are functions of class $ C^\infty $. The skewsymmetric endomorphism $ J $ is a uniquely determined  and is  independent of $ i $ and  the choice of the system $ (e_1, ..., e_r) $ and commutes with each elements of $ {\cal H}_1(R_m) $ at any point $ m \in M $.
\end{Le}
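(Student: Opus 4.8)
The plan is to fix a point $m\in M$ and to work entirely inside the Euclidean space $E_1=E_1(m)$, on which---by the proposition preceding the statement---the primitive holonomy algebra $\h_1(R_m)\subset\mathrm{so}(E_1)$ acts irreducibly, $R=R_m$ being the (Einstein) irreducible Riemannian symmetric curvature tensor of $E_1$. First I would feed $\na_{e_i}R=-2\mu_iR$ (\ref{R-nabla-R}) into (\ref{nabRRoBxy}) to obtain the algebraic identity $R(Y,A_i(X))+R(A_i(Y),X)=-2\mu_iR(X,Y)$; setting $B_i:=A_i-\mu_i I$ the $I$-term cancels and this reads $R(B_i(X),Y)+R(X,B_i(Y))=0$ for all $X,Y\in E_1$. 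Pairing with two more vectors and using the symmetry (\ref{eq1-5}) together with the fact that the operators $R(Z,W)$ span $\h_1(R_m)$, one checks this is equivalent to
\[ A\,B_i+B_i^{*}\,A=0\qquad\text{for every }A\in\h_1(R_m), \]
where $B_i^{*}$ is the adjoint of $B_i$ with respect to the (positive definite) metric of $E_1$. The Bianchi relation (\ref{BianBB}) then adds nothing, being a consequence of this identity and the first Bianchi identity.

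The heart of the argument is to show that the symmetric part of $B_i$ vanishes. Write $B_i=P+Q$ with $P=P^{*}$ and $Q=-Q^{*}$; the displayed identity becomes $\{A,P\}+[A,Q]=0$ for all $A\in\h_1(R_m)$, where $\{\,,\,\}$ and $[\,,\,]$ are the anticommutator and the commutator. The decisive trick I would use is the trace identity $-\mathrm{tr}(\{A,P\}^2)=\mathrm{tr}(\{A,P\}\,[A,Q])=\mathrm{tr}(A^2\,[Q,P])$, which follows from $\{A,P\}=-[A,Q]$ by cyclic permutation. Summing over a basis $(A_k)$ of $\h_1(R_m)$ orthonormal for the $\mathrm{Ad}$-invariant form $-\mathrm{tr}(AB)$, the operator $\sum_kA_k^2$ is the Casimir, which by Schur's lemma equals $-\kappa\,I$ with $\kappa>0$ (here $\h_1(R_m)\ne\{0\}$ since $\la\ne0$). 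Hence $\sum_k\|\{A_k,P\}\|^2=-\kappa\,\mathrm{tr}([Q,P])=0$, so $\{A,P\}=0$ for every $A$, i.e. the symmetric endomorphism $P$ anticommutes with $\h_1(R_m)$. When $\dim E_1\geq 3$ this forces $P=0$: otherwise the $\pm c$-eigenspaces of $P$ would be interchanged by every $A\in\h_1(R_m)$, so each bracket $[A,A']$ would preserve them while each $A$ swaps them, making $\h_1(R_m)$ abelian and $\dim E_1\leq 2$, a contradiction. Thus $B_i=Q$ is skew-symmetric and the identity collapses to $[A,B_i]=0$: the operator $B_i$ commutes with $\h_1(R_m)$.

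It remains to recognise $B_i$. Since $E_1$ is an irreducible real $\h_1(R_m)$-module, its commutant is, by Schur's lemma, one of $\mathbb{R}$, $\mathbb{C}$ or $\mathbb{H}$. The quaternionic case is impossible here: it would provide two anticommuting complex structures commuting with the holonomy, hence parallel, so that $E_1$ would be hyper-K\"{a}hler and therefore Ricci-flat, contradicting $\Ri=\la\,I$ with $\la\ne0$ (an irreducible Ricci-flat symmetric space is flat). In the real case the skew part of the commutant is $\{0\}$, so $B_i=0$ and $A_i=\mu_i I$. In the complex case the skew part of the commutant is a line $\mathbb{R}J$ with $J^2=-I$ and $J^{*}=-J$; then $B_i=\la_iJ$, whence $A_i=\mu_i I+\la_iJ$ and $\dim E_1$ is even. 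As this skew commutant is one-dimensional and intrinsic to the $\h_1(R_m)$-module $E_1$, the structure $J$ is the same for every $i$, is unchanged under a change of frame $e_i'=\sum_kc_i^ke_k$ (since then $B_{e_i'}=\sum_kc_i^kB_{e_k}\in\mathbb{R}J$), is unique up to sign, and commutes with $\h_1(R_m)$, hence with the connected group ${\cal H}_1(R_m)$. Finally $\mu_i$ is $C^\infty$ (already established), while $\la_i^2=-\tfrac{1}{\dim E_1}\mathrm{tr}(B_i^2)$ and $J=\la_i^{-1}B_i$ depend algebraically on the smooth fields $A_i$ and $R$, so $\la_i$ and $J$ are $C^\infty$.

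The step I expect to be the real obstacle is the vanishing of the symmetric part $P$: a priori the relation $\{A,P\}+[A,Q]=0$ couples $P$ and $Q$, and it is precisely the Casimir averaging identity $\|\{A,P\}\|^2=\mathrm{tr}(A^2[Q,P])$ that decouples them and yields $P=0$. Once $B_i$ is known to be skew and to commute with the holonomy, the identification of $J$ is a soft consequence of Schur's lemma and of the exclusion of the quaternionic (hyper-K\"{a}hler) case.
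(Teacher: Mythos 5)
Your argument is correct, and it supplies exactly what the paper omits: the paper's ``proof'' is a one-line pointer to Szab\'o's Lemma~4.4 applied to the irreducible Riemannian symmetric holonomy system $[E_1,R_m,{\cal H}_1(R_m)]$, whereas you reconstruct that argument in full. The chain you follow --- combining (\ref{R-nabla-R}) with (\ref{nabRRoBxy}) to get $R(B_iX,Y)+R(X,B_iY)=0$ for $B_i=A_i-\mu_iI$, translating this via (\ref{eq1-5}) into $AB_i+B_i^{*}A=0$ for all $A\in\h_1(R_m)$, killing the symmetric part $P$ of $B_i$ by the Casimir averaging $\sum_k\|\{A_k,P\}\|^2=-\kappa\,\mathrm{tr}([Q,P])=0$, and then identifying the skew part inside the commutant by Schur's lemma --- is sound at every step I checked (in particular $\{A,P\}$ is indeed skew, so its squared trace is $-\|\{A,P\}\|^2$, and the Casimir is a strictly negative multiple of $I$ because $\la\neq0$ forces $\h_1(R_m)\neq\{0\}$). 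Two caveats are worth recording. First, your elimination of $P$ genuinely requires $\dim E_1\geq3$: when $\dim E_1=2$ one has $\h_1(R_m)=\mathrm{so}(2)=\R J$ and every traceless symmetric $P$ anticommutes with $J$, so the statement as printed (which only guarantees $\dim E_1\geq2$) is not covered by this proof --- this is a gap in the paper's hypotheses rather than in your argument, but it should be flagged, since the lemma is only invoked later under the assumption $n-r\geq3$. Second, the exclusion of the quaternionic commutant is better phrased intrinsically for symmetric holonomy systems (Simons' classification: an irreducible symmetric holonomy system with commutant containing $\mathbb{H}$ has $R=0$) rather than via parallel complex structures on a manifold, and the smoothness of $\la_i$ and of $J=\la_i^{-1}B_i$ is only immediate away from the zero set of $\la_i$; both points are minor and are equally glossed over in the source the paper cites.
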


\begin{proof}
	
The proof is similar as  that of Lemma 4.4 in (\cite{zabo}) considering the Lie algebra ${\cal H}_1(R_m)$ and the Riemannian irreducible symmetric holonomy system $S_m:=[E_1,R_m, {\cal H}_1(R_m)]$.
\end{proof}

From the formula\ref{eq22}, we get 	\begin{equation}
	 \mu_i=-\frac{1}{2\la}\frac{\partial \la}{\partial u_i},~~~~1\leq i\leq r.
	\end{equation}
So, if $A_i=\mu_i I+\la_i J$, since $\na_{e_i}A_i=-A_i^2$, we get 
\begin{equation}
\na_{e_i} J=0.
\end{equation}
\begin{equation}
\na_{e_i} \mu_i=\la_i^2-\mu_i^2.
\end{equation}
 \begin{equation}
 -2\la_i \mu_i=\frac{\partial \la_i}{\partial u_i}.
 \end{equation}

Then \begin{equation}\label{eq4-32}\la\frac{\partial \la_i}{\partial u_i}=\la_i\frac{\partial \la}{\partial u_i}\end{equation}

\begin{Le}\label{le2-33}
		If    the co-nullity index satisfies   $n-r\geq 3$, we get that  \[r\in\{1,2\}.\]
\end{Le}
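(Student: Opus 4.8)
The plan is to exploit the parallel frame $(X_1,\dots,X_{n-r})$ of $E_1$ to turn the pointwise relations of Lemma \ref{lemma41} into a genuine system of ordinary differential equations along $\mathcal{N}_0$, and then to read off a dimension bound from the algebraic structure of the second fundamental forms. First I would rewrite everything in the frame where $\na_{e_i}X_k=0$. There the formula \eqref{naB1}, $(\na_{e_i}A_j)(X)=-A_j\circ A_i(X)$, becomes the matrix identity $\partial_{u_i}A_j=-A_jA_i$, where $A_j$ now denotes the matrix of the endomorphism in the parallel frame. Using the preceding lemma, $A_j=\mu_j I+\la_j J$ with a single $J$ satisfying $J^2=-I$ and $\na_{e_i}J=0$, so I would encode $A_j$ by the complex function $\zeta_j:=\mu_j+\sqrt{-1}\,\la_j$ (identifying $I\leftrightarrow1$, $J\leftrightarrow\sqrt{-1}$). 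The whole system then collapses to the scalar Riccati system
\[
\partial_{u_i}\zeta_j=-\zeta_i\zeta_j,\qquad 1\le i,j\le r.
\]

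Next I would integrate this. Since $\mathcal{N}_0$ is flat we have $R(e_i,e_j)=0$, and the $e_i=\partial/\partial u_i$ commute, so the operators $\na_{e_i}$ commute on $E_1$; the resulting symmetry $\partial_{u_i}\zeta_j=\partial_{u_j}\zeta_i$ shows that $(\zeta_i)$ is a closed $1$-form, hence $\zeta_i=\partial_{u_i}\ln L$ for a function $L$ on $\mathcal{N}_0$. The identity $\partial_{u_i}\partial_{u_j}\ln L=-\partial_{u_i}\ln L\,\partial_{u_j}\ln L$ then forces $\partial_{u_i}\partial_{u_j}L=0$, i.e. $L=c_0+\sum_k c_ku_k$ is affine with \emph{constant} coefficients $c_k$, and $\zeta_i=c_i/L$. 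The trace formula \eqref{eq22} gives $\mu_i=-\frac{1}{2\la}\partial_{u_i}\la=\partial_{u_i}\ln|L|$, so $|\la|=C/|L|^2$; geometrically $M$ is, along the leaves, a warped product of the flat $\mathcal{N}_0$ with the Einstein symmetric leaf of $E_1$, the warping being governed by $L$.

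Finally, the dimension bound. Consider the linear map $\alpha\colon E_0\to\mathrm{span}\{I,J\}$, $e_i\mapsto A_i=\mu_iI+\la_iJ$. Because $[E_1,R_m,\mathcal{H}_1(R_m)]$ is an \emph{irreducible} Riemannian symmetric holonomy system, the commutant furnishing $J$ is at most one–dimensional, so $\operatorname{rank}\alpha\le\dim_{\R}\mathrm{span}\{I,J\}\le2$. If $r\ge3$ then $\ker\alpha\ne\{0\}$; since the $c_k$ are constant, the kernel is the fixed subspace $\{t\in\R^r:\sum_kt_kc_k=0\}$ of $E_0$, and for any $t$ in it the coordinate field $e:=\sum_kt_ke_k$ satisfies $A_e=\sum_kt_kA_k\equiv0$ and $\na_{e_j}e=0$. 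Along such a direction the warping $L$ is constant, so the metric splits off the flat line $\R e$ as a parallel factor; when $e$ can be chosen nondegenerate this contradicts the simple–leaf (indecomposability) hypothesis of \ref{lemme43}, forcing $r\le2$.

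The main obstacle is precisely the degenerate case: inside the Lorentzian $E_0$ the kernel of $\alpha$ (of dimension $\ge r-2$) may reduce to a single null line when $r=3$ and $J$ is present, and a null parallel direction does \emph{not} by itself contradict indecomposability. Handling it requires the finer Lorentzian information — namely reducing, as in Proposition \ref{proposition41}, to the situation where the remaining curvature satisfies $R^2=0$, or using that a holonomy–invariant $J$ on an irreducible symmetric $E_1$ of dimension $\ge3$ together with the reality of $\la$ over a Lorentzian base is incompatible with a one–dimensional null kernel. Verifying the splitting itself — controlling $\na_Xe$ for $X\in E_1$ and checking that $\R e$ is genuinely parallel and nondegenerate — is the other technical point to nail down.
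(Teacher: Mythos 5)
Your central algebraic observation coincides with the paper's: every second fundamental form $A_i$ lies in the at most two--dimensional space $\mathrm{span}\{I,J\}$, so the map $\alpha\colon e_i\mapsto A_i$ has rank at most $2$. (The Riccati integration $\zeta_i=c_i/L$ is correct but is not needed for the dimension count.) The genuine gaps are exactly the two you flag at the end, and they are not minor. First, $A_e=0$ does not make $\R e$ parallel: by the decomposition $\na_Xe_i=A_i(X)+\sum_jB_i^j(X)e_j$, killing $A_e$ only kills the $E_1$--component of $\na_Xe$; the normal--connection part $\sum_jB_e^j(X)e_j$ survives, and nothing in your argument shows that $\ker\alpha$ is invariant under these $B_i^j$. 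Without that there is no parallel nondegenerate factor, hence no splitting to contradict the simple--leaf hypothesis. Second, since $E_0$ is Lorentzian, $\ker\alpha$ (the orthogonal in $E_0$ of the image of the adjoint of $\alpha$) may be degenerate, and a null line does not by itself violate indecomposability; you name this obstacle but do not resolve it. As written the argument therefore does not close.

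The paper avoids both difficulties by never attempting a splitting. Lemma \ref{lemme43} (the simple--leaf hypothesis) already gives $TM=F_1\oplus\left(F_1\cap F_1^{\bot}\right)'=E_0\oplus E_1$, where $F_1$ is generated by $E_1$ together with its iterated covariant derivatives; consequently $E_0$ must be exhausted by $\mathrm{span}\{\mathrm{pr}_0(\na_XY)\,:\,X,Y\in E_1\}$, up to at most the dual of an isotropic line. Since $g(\na_XY,e_i)=-g(A_i(X),Y)=-\mu_i\,g(X,Y)-\la_i\,g(JX,Y)$, every $\mathrm{pr}_0(\na_XY)$ lies in $\mathrm{span}\{\sum_i\varepsilon_i\mu_ie_i,\ \sum_i\varepsilon_i\la_ie_i\}$, which has dimension at most $2$, and $r\le2$ follows directly; the degenerate possibility is handled by the separate remark that no isotropic field occurs in $F_1$ unless $\dim E_0=2$, using $\dim E_1\ge3$ to compare $\mathrm{pr}_0(\na_XX)$ for different unit fields $X$. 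To salvage your version you would have to prove that $\ker\alpha$ is $B$--invariant and nondegenerate; it is much simpler to replace the splitting step by the generation statement already packaged in Lemma \ref{lemme43}.
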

\begin{proof}$ $

	By lemma\ref{lemme43}, we get  $$TM={F_1}\oplus  \left( F_1\cap  \left( F_1\right)^{\bot} \right)'=E_0\oplus E_1,$$ and   \[F_1=span\{pr_0(\na_XY)/~X,~Y\in E_1\}\]
where  $pr_0~: TM\vals { {E}}_0$   is the orthogonal projection in ${ {E}}_0$. We will show that  \[r=\dim(span\{pr_0(\na_XY)/~X,~Y\in E_1\}).\]
	
	Necessarily, we have  $r\geq1$. Let $(e_1,...,e_r)$ be an orthonormal fram of ${ {E}}_0$.
	
	First, we show $\dim(span\{pr_0(\na_XX)/~X\in E_1\})=1$;\\
	Let us  $X$, $Y\in E_1$. If $X$ and   $Y$ are orthogonal unit field. Let  $A_i=\mu_i I+\la_i J$ be a   second  fundamental form,  where $J$ is skew-symmetric satisfying  $J^2=-I$. Then \[g(\na_XX , e_i)=-\mu_i g(X,X)=-\mu_i g(Y,Y)=g(\na_YY , e_i).\]
	consequently, $pr_0(\na_XX)=pr_0(\na_YY)$.
	
If	  $X$ and   $Y$ are not orthogonal. Since $\dim(E_1)\geq 3$, we can choose $Z$ orthogonal to both  $X$ and  $Y$.
	
	Then, $pr_0(\na_XX)=pr_0(\na_ZZ)=pr_0(\na_YY)$, thus 	
	\begin{equation}\label{eq2-23}
	\dim(span\{pr_0(\na_XX)/~X\in E_1\})=1.\end{equation}
	Moreover, there is no isotropic vector field in $F_1$ unless $\dim(E_0)=2$.
	
	If any second  fundamental forms are of the form $A_i=\mu_i I$, we get that \[\dim(span\{pr_0(\na_XY)/~X,~Y\in E_1\})=1.\]
	Indeed, for all orthogonal fields $X$ and $Y$ in $E_1$, we have \[g(\na_XY , e_i)=-\mu_i g(X,Y)=g(\na_YX , e_i)=0.\] Hence $span\{pr_0(\na_XY)/~X,~Y\in E_1\} =span\{pr_0(\na_XX)/~X \in E_1\}.$

If there is a second fundamental form $A_i=\mu_i I+\la_i J$ where  $\la_i\neq 0$ and $J$ is skewsymmetric endomorphism checking $J^2=-I$.\\
	Then $\dim(E_1)=2l$  and we can choose $\{X_1,Y_1=J(X_1),...,X_l,Y_l=J(X_l)\}$ an  orthonormal fram of  $E_1$.
	
	Let $A_k=\mu_k I+\la_k J$ be the second fundamental form associted   to the vector fields $e_k$. For all $i\neq j$, we get
	\begin{equation}\label{eq2-24} \begin{array}{ccccc}
	g(\na_{ X_i}Y_i,e_k)&= g(\na_{ X_j}Y_j,e_k)&=&-\la_k& \\
	g(\na_{ X_i}Y_j,e_k)&=g(\na_{ Y_j}X_i,e_k)&=&g(\na_{ X_j}X_i,e_k)&= g(\na_{Y_j}Y_i,e_k)=0.\end{array} \end{equation}
	
	Thus $$\dim\bigg(span\bigg\{pr_0(\na_XY)/~X , Y \in E_1, \; g(X,Y)=0\bigg\}\bigg)=1$$
	
	and $F_1=span\{pr_0(\na_XY)/~X , Y \in E_1, \; g(X,Y)=0\}+span\{pr_0(\na_XX)/~X  \in E_1\}.$
	
	Consequently, we get \[r=\dim\bigg(span\bigg\{pr_0(\na_XY)/~X,~Y\in E_1\bigg\}\bigg)\leq 2.\]
	
\end{proof}



	\end{document}